\documentclass[11pt]{amsart}
\usepackage{amsthm}
\usepackage{amssymb}
\usepackage{amsfonts}
\usepackage{graphicx}
\usepackage{hyperref}
\usepackage{ytableau}
\usepackage[margin=1in]{geometry}
\hypersetup{
    colorlinks=true,
    urlcolor=blue,
    linkcolor=black,
    citecolor=black,
    filecolor=black
}

\usepackage[utf8]{inputenc}
\usepackage{cleveref}
\usepackage{thm-restate}
\usepackage[
backend=biber,
style=alphabetic,
sorting=nyt
]{biblatex}
\renewbibmacro{in:}{}
\DeclareFieldFormat{pages}{#1}
\DeclareFieldFormat[article,unpublished]{title}{#1}

\newtheorem{theorem}{Theorem}[section]
\newtheorem{prop}[theorem]{Proposition}
\newtheorem{cor}[theorem]{Corollary}
\newtheorem{conj}[theorem]{Conjecture}
\newtheorem{quest}[theorem]{Question}

\newtheorem{lem}[theorem]{Lemma}
 
\newtheorem{convention}[theorem]{Convention}

\theoremstyle{definition}
\newtheorem{defin}[theorem]{Definition}
\newtheorem{remark}[theorem]{Remark}
\newtheorem{exam}[theorem]{Example}

\newcommand{\Pln}{\mathcal{P}_\ell(n)}
\newcommand{\Pn}{\mathcal{P}(n)}
\newcommand{\pre}{\mathrm{pre}}
\newcommand{\prh}{\mathrm{prh}}
\newcommand{\prs}{\mathrm{prs}}
\newcommand{\ZZ}{\mathbb{Z}}

\title{Injectivity of symmetric polynomial maps on partitions}

\author{Rohith Thomas}
\address{Cherry Creek High School, Greenwood Village, CO.}
\email{\href{mailto:rohithkrishnathomas@gmail.com}{rohithkrishnathomas@gmail.com}}

\author{Katherine Tung}
\address{Department of Mathematics, Harvard University, Cambridge, MA.}
\email{\href{mailto:katherinetung@college.harvard.edu}{katherinetung@college.harvard.edu}}
\date{\today}

\begin{document}
\begin{abstract}

Introduced by Ballantine, Beck, and Merca, the elementary symmetric partition function $\pre_k$, defined on the set of partitions with at least $k$ parts, has been a topic of recent interest. We prove that $\pre_k$ is injective on the set of $m$-ary partitions for positive integers $m \ge k$, generalizing the binary $k = 2$ result of Ballantine, Beck, and Merca, and complementing a result of Hadelyn, Niergarth, Li and Li showing that, for each $k \ge 3$, $\pre_k$ is not injective on partitions of $n$ with length $2k$ for infinitely many $n$. We introduce the skew Schur partition function $\prs_{\lambda'/\mu'}$, prove injectivity results for particular choices of $\lambda',\mu'$, and describe an application to representation theory.

\end{abstract}
\maketitle

\section{Introduction}\label{sec:intro}

In this paper, we study certain functions constructed from integer partitions and symmetric polynomials, two classes of well-studied objects in combinatorics \cite{Andrews_1984,EC1,EC2,BBM, Sagan,Li,Bplus,DeEy,Garg,Ha+}.

First defined by Ballantine, Beck, and Merca \cite{BBM}, the elementary symmetric partition function $\pre_k$ is a map sending an integer partition $\lambda$ with at least $k$ parts to the summands of the elementary symmetric polynomial $e_k$ evaluated at $\lambda$. One may ask whether $\pre_k$ is injective. A back-of-the-envelope check reveals that $\pre_2(2,2) = \pre_2(4,1)$, so it is natural to filter by the size of the partitions before asking about injectivity. 

\begin{restatable}{quest}{mainQuest}\textup{\cite{BBM}}\label{quest: main}
For what positive integers $n$ and $k$ is the map $\pre_k$ injective on the set of partitions of $n$ of length greater than or equal to $k$?\end{restatable}
Devnani and Eyyunni~\cite{DeEy} gave an infinite family of examples demonstrating that $\pre_k$ is not injective on the set of partitions of length $k$. They also posed the following refinement of Question \ref{quest: main}.

\begin{conj}\textup{\cite{DeEy}}\label{conj: main}
For all positive integers $n$ and $k$, the map $\pre_k$ is injective on the set of partitions of $n$ with length strictly greater than $k$.
\end{conj}

Hadelyn et al.~\cite{Ha+} disproved this conjecture. 
We give a slightly different argument against the conjecture in Section~\ref{subsec: neg}.

\begin{restatable}{theorem}{mainTheorem}\label{theorem: main}
There exist infinitely many $n$ for which $\pre_3$ is not injective on partitions of $n$ with length greater than $3$.
\end{restatable}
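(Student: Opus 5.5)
The approach is to reduce the statement to exhibiting a \emph{single} collision and then propagate it by scaling. Recall that $\pre_3(\lambda)=e_3(\lambda_1,\dots,\lambda_\ell)$, the sum over all $3$-element subsets of parts of the product of those parts. The key observation is homogeneity. If $\lambda=(\lambda_1,\dots,\lambda_\ell)$ is a partition of $n$ and $t$ is a positive integer, then $t\lambda=(t\lambda_1,\dots,t\lambda_\ell)$ is a partition of $tn$. It has the same length $\ell$, and
\[
\pre_3(t\lambda)=e_3(t\lambda_1,\dots,t\lambda_\ell)=t^3\,e_3(\lambda_1,\dots,\lambda_\ell)=t^3\pre_3(\lambda).
\]
So suppose $\lambda\neq\mu$ are partitions of the same $n_0$, both of length greater than $3$, with $\pre_3(\lambda)=\pre_3(\mu)$. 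Then for every $t\ge 1$ the partitions $t\lambda\neq t\mu$ are distinct partitions of $tn_0$, both of length greater than $3$, and they satisfy $\pre_3(t\lambda)=\pre_3(t\mu)$. This gives non-injectivity for the infinitely many values $n=tn_0$, which proves Theorem~\ref{theorem: main} once one base example is in hand.

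The whole proof therefore rests on producing one explicit pair $(\lambda,\mu)$, and this is the main obstacle. Partitions of $n_0\le 9$ with lengths $4$ and $5$ show no collision by hand, so the base example must be found by a more systematic search.

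There are two routes. The first is a direct computer search over $n_0=10,11,\dots$, listing all partitions of length $\ge 4$ and bucketing them by the pair $(|\lambda|,e_3(\lambda))$. Since $e_3$ takes values of size roughly $n^3$ while the number of partitions grows subexponentially faster, collisions must appear eventually, and I expect one at moderate $n_0$, likely in the low teens to twenties. The second route works algebraically with length-$4$ partitions $(a,b,c,d)$ and $(a',b',c',d')$. Here we need $e_1$ and $e_3$ to agree while $e_2$ and $e_4$ are free. One family to try is to take $\lambda$ and $\mu$ as two quadruples with equal sum differing in two parts, $(a,b,x,y)$ versus $(a,b,x',y')$ with $x+y=x'+y'$. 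Writing $s=x+y$, $p=xy$ and $p'=x'y'$, the condition $e_3$ equal reduces to
\[
ab\,s+(a+b)\,p=ab\,s+(a+b)\,p',
\]
which forces $p=p'$, so this naive ansatz fails. I would instead allow three parts to change, or allow the two partitions to have different lengths ($4$ versus $5$). In that case the length-$5$ partition can compensate through its extra part, and the condition becomes a single Diophantine equation in a few variables that is easy to solve by small search.

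Once a candidate pair is found, the remaining work is routine. We verify $|\lambda|=|\mu|$ and $e_3(\lambda)=e_3(\mu)$ by direct expansion, display the computation, and invoke the scaling argument above to conclude.
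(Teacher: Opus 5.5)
Your scaling reduction is correct, and it also holds for the multiset, since every part scales by $t^3$. It is essentially how the paper's proof ends: its family $(40q,16q,16q,16q,5q,5q)$ versus $(32q,32q,10q,10q,10q,4q)$ is just $q$ times one collision in $\mathcal{P}(98)$.

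\textbf{The gap: a misreading of the definition.} The gap is in the base case. $\pre_3(\lambda)$ is not the number $e_3(\lambda_1,\dots,\lambda_\ell)$. It is the partition whose $\binom{\ell}{3}$ parts are the individual summands $\lambda_i\lambda_j\lambda_k$; compare $\pre_2(4,2,1,1)=(8,4,4,2,2,1)$ in the paper. Everything you propose for finding the base pair targets equality of the sum, which is far weaker. For instance, $(3,2,2,1)$ and $(4,1,1,1,1)$ are both partitions of $8$ with $e_3=28$, so even your claim that nothing happens for $n_0\le 9$ fails under your own reading. Yet $\pre_3$ sends them to $(12,6,6,4)$ and $(4^6,1^4)$, which are different.

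Under the correct definition, your proposed routes cannot work:
\begin{itemize}
\item A collision forces equal lengths, since $\ell(\pre_3(\lambda))=\binom{\ell(\lambda)}{3}$. So the ``4 versus 5'' idea cannot work.
\item $\pre_3$ is injective on $\mathcal{P}_4(n)$ and on $\mathcal{P}_5(n)$. Theorem~\ref{theorem: length} reduces these cases to $\pre_1$ and $\pre_2$, the latter injective by \cite{Li}; this is Corollary~\ref{cor: lengthcase}. So no length-$4$ ansatz can ever succeed, and the smallest usable length is $6$.
\end{itemize}

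\textbf{What is missing: an explicit pair.} The theorem's entire content is an explicit collision, and the proposal never produces one. The paper gets it as follows:
\begin{itemize}
\item It chooses the shapes $\lambda=(a,b,b,b,c,c)$ and $\mu=(d,d,e,e,e,f)$. Each $\pre_3$ image consists of six product types with multiplicities $3,6,1,1,6,3$.
\item It matches products of equal multiplicity: $ab^2=d^2e$, $abc=de^2$, $ac^2=e^3$, $b^3=d^2f$, $b^2c=def$, $bc^2=e^2f$. This leaves three free parameters.
\item It imposes the size condition $a+3b+2c=2d+3e+f$ and takes $c=5$, $e=10$.
\end{itemize}
This yields $(40,16,16,16,5,5)$ and $(32,32,10,10,10,4)$, with common image $10240^3\,4096\,3200^6\,1280^6\,1000\,400^3$.

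A computer search would also be a legitimate way to find the base pair. It must, however, compare sorted multisets of triple products among partitions of the same length $\ge 6$, not values of $e_3$.
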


However, there exist restricted classes of partitions on which $\pre_k$ is injective. For example, Ballantine, Beck, and Merca showed the injectivity of $\pre_2$ on the binary partitions of $n$. This result was generalized by Li, who showed injectivity of $\pre_2$ on all partitions of $n$ with at least 2 parts~\cite{Li}. 
As another example, Ballantine et al. showed that $\pre_k$ is injective on a family of partitions $\mathcal{S}_k$ consisting of partitions with at least $k$ $1$'s or $(k - 1)$ $1$'s and at least one prime part \cite{Bplus}.

Extending this line of research, we prove injectivity on the $m$-ary partitions, namely partitions whose parts are all powers of $m$, of $n$ for $m\geq k$:
\begin{restatable}{theorem}{maryTheorem}\label{theorem: mary}
For integers $m\geq k\geq 1$ and $n\geq 0$, the map $\pre_k$
is injective on the set of $m$-ary partitions of $n$.
\end{restatable}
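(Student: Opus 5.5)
The approach is to prove something stronger than injectivity: $\pre_k$ is \emph{strictly monotone} with respect to a natural total order on $m$-ary partitions of $n$. Only partitions with at least $k$ parts count, since $\pre_k$ is defined only there. For two distinct $m$-ary partitions $\lambda,\mu$ of $n$, let $m^j$ be the largest power of $m$ whose multiplicity differs between them. Write $\lambda\succ\mu$ if $\lambda$ has more parts equal to $m^j$. This is a total order (reverse lexicographic on the multiplicity vectors $(c_J,\dots,c_1,c_0)$), so injectivity follows once we show that $\lambda\succ\mu$ implies $\pre_k(\lambda)<\pre_k(\mu)$. Intuitively, coarser partitions have fewer and larger parts and hence a smaller $k$-th elementary symmetric value. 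Throughout I use $\pre_k(\lambda)=e_k(\lambda_1,\dots,\lambda_\ell)$ together with the multiplicity form
\[
\pre_k(\lambda)=\sum_{t_0+t_1+\cdots=k}\ \prod_{i\ge 0}\binom{c_i}{t_i}m^{i t_i}.
\]

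\textbf{Step 1 (local move).} I would first prove that a \emph{split} strictly increases $e_k$. A split replaces one part $m^{i+1}$ by $m$ copies of $m^i$. Let $R$ be the multiset of remaining parts, and set $a=m^{i}$. Then
\[
\pre_k(\text{after})-\pre_k(\text{before})=\sum_{s\ge 1}\Bigl(\binom{m}{s}a^{s}-[s=1]\,ma\Bigr)e_{k-s}(R)=\sum_{s\ge 2}\binom{m}{s}a^s e_{k-s}(R)\ \ge 0.
\]
This is strictly positive as soon as the new partition has enough parts for some $s\ge2$ term to survive. Since the total sum is preserved, this is just the statement that splitting increases $e_k$; the edge cases when $\ell<k$ need separate handling.

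\textbf{Step 2 (comparing across the order).} Given $\lambda\succ\mu$ differing first at level $j$, neither partition need be reachable from the other by splits. So I would compare both to extremal partitions. Let $\lambda^{\max}$ be obtained from $\lambda$ by fully splitting every part below level $j$ into $1$'s; by Step 1, $\pre_k(\lambda)\le\pre_k(\lambda^{\max})$. Let $\mu^{\min}$ be the $\preceq$-largest, i.e.\ coarsest, partition sharing $\mu$'s multiplicities at levels $>j$ and having exactly $c_j(\lambda)-1$ parts $m^j$. Then $\mu^{\min}$ is obtained from $\lambda^{\max}$ by splitting one more $m^j$ into $m^j$ ones and then greedily merging the ones below level $j$. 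I would set up this chain so that $\mu$ is reached from $\mu^{\min}$ by splits, giving $\pre_k(\mu)\ge\pre_k(\mu^{\min})$. The remaining task is the key inequality
\[
\pre_k(\lambda^{\max})<\pre_k(\mu^{\min}).
\]

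\textbf{Step 3 (main obstacle).} The hard part is this inequality. Here the coarsest configuration below level $j$ is re-merged, so merges (which decrease $e_k$) compete with the extra split of $m^j$. This is exactly where $m\ge k$ must enter; for $m<k$ the statement fails, e.g.\ $\pre_3$ on binary partitions. My first attempt would be to expand both sides in the multiplicity formula with common top part $R$, writing each as $\sum_s e_{k-s}(R)\,\cdot(\text{contribution of the low part})$, and then compare the low contributions term by term for each $s\le k$. For $\lambda^{\max}$, the low part is $N$ ones (plus $c_j$ copies of $m^j$). For $\mu^{\min}$, it is $N+m^j$ distributed as the base-$m$ expansion of $N+m^j$ (with $c_j-1$ copies of $m^j$). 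For $s\le k\le m$, I expect to bound $\binom{N}{s}$-type terms against the $e_s$ of a base-$m$ digit configuration, using that each digit is $<m$ and $s\le m$.

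If the termwise comparison fails, the fallback is induction on $n$. First determine $c_0\equiv n\pmod m$ from $\pre_k$, by showing that partitions with $c_0$ and $c_0+m$ ones are separated. Then divide the remaining parts by $m$ and apply the induction hypothesis to the partition of $(n-c_0)/m$.
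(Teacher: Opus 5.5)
Your reduction replaces the multiset $\pre_k(\lambda)$ by the single number $e_k(\lambda_1,\dots,\lambda_\ell)$, which is only its size. The strict monotonicity you want for that number is false, already for $m=k=2$.

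\textbf{The key inequality fails.} Take the binary partitions $\lambda=(4,1^{10})$ and $\mu=(2^7)$ of $14$. They first differ at the part $4$, so $\lambda\succ\mu$. Yet
\[
e_2(\lambda)=4\cdot 10+\binom{10}{2}=85>84=4\binom{7}{2}=e_2(\mu).
\]
Here $\lambda^{\max}=\lambda$ and $\mu^{\min}=\mu$, so this is a failure of your key inequality in Step 3 itself, not a weakness of the termwise estimate. Moreover, $(4,1^8)$ and $(2^6)$ both give $e_2=60$, so the number is not even injective.

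\textbf{No other order can rescue it.} For every partition of $n$ we have $e_k(\lambda)\le\binom{n}{k}$. The number of $m$-ary partitions of $n$ with at least $k$ parts grows faster than any polynomial in $n$, so for large $n$ the values of $e_k$ must collide. Even size, length and $e_2$ together do not suffice: $(8,2^{14})$ and $(4^7,1^8)$ agree in all three (namely $36$, $15$, $588$). Step 1 is correct, but Schur-concavity only compares partitions that are comparable in dominance order, and these pairs are not.

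\textbf{The fallback.} It starts correctly: $c_0=m_\lambda(1)$ is determined. If $m_\nu(1)>0$, then $m_\nu(1)=\binom{c_0}{k}$ recovers it. Otherwise $c_0<k\le m$, and $c_0$ is fixed by $n \bmod m$. This is the paper's use of the $\mathcal{S}_k$ result together with Lemma~\ref{lem: k-ary1}.

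The next step, ``divide the remaining parts by $m$ and apply the induction hypothesis,'' has no justification. Write $\lambda=\rho\cup 1^{c_0}$. Then $\nu$ is the union over $s$ of $\binom{c_0}{s}$ copies of $\pre_{k-s}(\rho)$. These pieces overlap in value, and nothing you have isolates $\pre_k(\rho)$, which need not even be defined when $\ell(\rho)<k$. So the inductive hypothesis never receives an input.

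\textbf{The missing idea.} You need to read the individual multiplicities $m_\nu(m^E)$. Each one counts the $k$-element sub-multisets of parts of $\lambda$ whose exponent sum is $E$. The paper uses these level by level.
\begin{itemize}
\item Let $a_0,\dots,a_{q-1}$ be the already-known lower multiplicities, summing to less than $k$, and set $a_q=k-\sum_{i<q}a_i$.
\item Only one configuration attains the minimal exponent sum $\sum_{i\le q} i\,a_i$. Hence that multiplicity equals $\binom{m_\lambda(m^q)}{a_q}$.
\item If this value is positive, it determines $m_\lambda(m^q)$. If it is $0$, then $m_\lambda(m^q)<k\le m$, and $m_\lambda(m^q)$ is read off from $n \bmod m^{q+1}$.
\item Suppose the lower levels first account for at least $k$ parts at level $p$, and let $E_0$ be the minimal exponent sum. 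Then $m_\nu(m^{E_0+g})$ is an affine function of $m_\lambda(m^{p+g})$ with positive slope, so it determines that multiplicity.
\end{itemize}
The hypothesis $m\ge k$ genuinely enters through the mod-$m$ steps, where a multiplicity below $k$ is pinned down by a congruence.
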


In the literature, there are a variety of techniques used to establish injectivity. For instance, to show injectivity of $\pre_2$, Li~\cite{Li} utilized the fundamental structure of $e_2$ as well as the extremal principle. By contrast, Ballantine et al.~\cite{Bplus} and Ballantine, Beck, and Merca~\cite{BBM} adopted a more algorithmic approach when proving their results, attempting to construct $\lambda$ from $\pre_k(\lambda)$. We use a variety of approaches throughout the paper to prove our injectivity results.

\begin{theorem}\label{theorem: prh}
The map $\prh_k$ is injective on partitions of $n$ for all positive integers $n$ and $k$.
\end{theorem}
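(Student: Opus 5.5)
The plan is to read $\prh_k(\lambda)$, by analogy with $\pre_k$, as the \emph{multiset} of summands of the complete homogeneous symmetric polynomial $h_k$ evaluated at the parts of $\lambda=(\lambda_1\ge\lambda_2\ge\cdots\ge\lambda_\ell)$. That is, $\prh_k(\lambda)$ is the multiset of products $\lambda_{i_1}\lambda_{i_2}\cdots\lambda_{i_k}$ over all weakly increasing index sequences $1\le i_1\le\cdots\le i_k\le\ell$. Under this reading (the numerical value of $h_k$ cannot be meant, since for $k=1$ it equals $n$ on every partition of $n$), I will prove something slightly stronger than the statement: $\lambda$ can be reconstructed algorithmically from $\prh_k(\lambda)$, with no use of $n$. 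Injectivity on partitions of $n$ then follows at once.

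The reconstruction is greedy and determines $\lambda_1,\lambda_2,\dots$ in order. For the first step, the largest element of $\prh_k(\lambda)$ is $\lambda_1^k$, because every part is at most $\lambda_1$; hence $\lambda_1=(\max\prh_k(\lambda))^{1/k}$. For the inductive step, suppose $\lambda_1,\dots,\lambda_j$ are already known. Form the multiset $M_j$ of all products $\lambda_{i_1}\cdots\lambda_{i_k}$ with $1\le i_1\le\cdots\le i_k\le j$. It is computable from the known parts and is a sub-multiset of $\prh_k(\lambda)$, namely the summands of $h_k(\lambda_1,\dots,\lambda_j)$. Take the multiset difference $R_j=\prh_k(\lambda)\setminus M_j$.
\begin{itemize}
\item If $R_j$ is empty, then $\ell=j$ and we stop. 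Indeed, when $j<\ell$ the monomial $\lambda_{j+1}^k$ is not in $M_j$.
\item Otherwise $R_j$ is exactly the multiset of monomials whose largest index exceeds $j$. Each such monomial is at most $\lambda_1^{k-1}\lambda_{j+1}$, since one factor is at most $\lambda_{j+1}$ and the others are at most $\lambda_1$. This bound is attained by the sequence $(1,\dots,1,j+1)$.
\item Therefore $\lambda_{j+1}=\max R_j/\lambda_1^{k-1}$.
\end{itemize}
By induction, every part of $\lambda$ and the length $\ell$ are determined by $\prh_k(\lambda)$, so $\prh_k(\lambda)=\prh_k(\mu)$ forces $\lambda=\mu$.

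The main point needing care is the multiset bookkeeping in the inductive step. Monomials with indices $\le j$ may coincide numerically with monomials involving later indices, for example when parts repeat. So one must check that subtracting $M_j$ as a multiset leaves exactly the monomials with some index $>j$, counted with multiplicity. This holds because $\prh_k(\lambda)$ is the disjoint union, indexed by index sequences, of the contributions from sequences inside $\{1,\dots,j\}$ and the rest, and the first of these is literally $M_j$. A second point is that the maximum of $R_j$ is identified correctly even under ties. This is also fine: we only use the maximal \emph{value}, and the argument above shows it equals $\lambda_1^{k-1}\lambda_{j+1}$ whatever the multiplicities. The case $k=1$ reduces to reading off the parts directly and needs no separate treatment.
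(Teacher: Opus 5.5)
Your proposal is correct and is essentially the paper's proof. It reads off $\lambda_1$ from the maximum $\lambda_1^k$, then inductively removes the sub-multiset of monomials in the already-determined parts and recovers the next part from the maximum $\lambda_1^{k-1}\lambda_{j+1}$ of what remains. The differences are minor: the paper first equates lengths via the count $\binom{k+\ell-1}{k}$ of summands, whereas you detect $\ell$ when the remainder becomes empty, and you spell out the multiset subtraction and ties that the paper leaves implicit.
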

Hadelyn et al.~\cite{Ha+} recently proved the above result, but we offer an independently derived proof. 
Furthermore, we offer an explicit algorithm to determine $\lambda$ from $\prh_k(\lambda)$ whenever $\lambda$ contains a positive number of $1$'s.

In addition to considering injectivity, Devnani and Eyyunni~\cite{DeEy} considered how many partitions of $n$ are in the image of $\pre_2(\lambda)$, finding at least one for each positive integer $n$. They defined $\pre_2(n)$ to be this number. They then asked the following question:
\begin{restatable}{quest}{deQuest}\textup{\cite{DeEy}}\label{quest: P3}
Does there exist an $n\geq 1$ such that $\pre_2(n)=1$?
\end{restatable}
We fully address this question, as well as a generalization of it, in Section~\ref{subsec: good}. In his recent note, Garg~\cite{Garg} also addressed this question, but we offer our own proof, derived independently of him, to showcase a different technique.
We utilize a recursive style argument rather than direct casework, as used in Garg~\cite{Garg}.

We also prove injectivity for the $\prs_{\lambda'}$ function defined in Hadelyn et al. when $\lambda' = (2,1)$, thereby making progress on their Question 5.3 \cite{Ha+}.
\begin{restatable}{theorem}{prsTheorem}\label{theorem: prs_(2,1)}
The map $\prs_{(2,1)}$ is injective on partitions of $n$ for all positive integers $n$.
\end{restatable}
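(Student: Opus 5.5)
The plan is to turn the multiset $\prs_{(2,1)}(\lambda)$ into information about the power sums of $\lambda$, and then use a Dirichlet-series uniqueness argument.

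\textbf{Setup.} Write $\lambda=(\lambda_1,\dots,\lambda_\ell)$, so that $\prs_{(2,1)}(\lambda)$ is the multiset $M$ of the monomials $x^T$ over semistandard tableaux $T$ of shape $(2,1)$, evaluated at $\lambda$. Concretely, $M$ consists of:
\begin{itemize}
\item $\lambda_i^2\lambda_j$ and $\lambda_i\lambda_j^2$ once each, for $i<j$;
\item $\lambda_i\lambda_j\lambda_k$ twice, for $i<j<k$.
\end{itemize}
For a real variable $s>0$, set $P(s)=\sum_i \lambda_i^{-s}$ and $F(s)=\sum_{x\in M}x^{-s}$. The identity $s_{(2,1)}=(p_1^3-p_3)/3$, applied to the variables $\lambda_i^{-s}$, gives
\[
3F(s)=P(s)^3-P(3s)\qquad (s>0).
\]
The left side depends only on $M$. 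In particular $|M|=F(0)=(\ell^3-\ell)/3$, which determines $\ell$, and $\ell\ge 2$ is needed for $M$ to be nonempty.

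\textbf{Reconstruction from the number of $1$'s.} Let $c$ be the number of parts of $\lambda$ equal to $1$. Then $P(3^k s)\to c$ as $k\to\infty$. Rewriting the identity as $P(s)=\bigl(3F(s)+P(3s)\bigr)^{1/3}$ and iterating, $P(s)$ is the limit of nested radicals
\[
\Bigl(3F(s)+\bigl(3F(3s)+\cdots+(3F(3^k s)+c)^{1/3}\cdots\bigr)^{1/3}\Bigr)^{1/3}.
\]
Each map $x\mapsto(3F(3^js)+x)^{1/3}$ is continuous and strictly increasing, so $P(s)=\Phi_s(c)$ for a function $\Phi_s$ determined by $M$ alone and strictly increasing in $c$. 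I will check convergence carefully: $P(3^ks)-c$ decays geometrically, and the cube-root maps are Lipschitz away from $0$ (or handle $c=0$ directly).

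Thus, once $c$ is known, $P(s)$ is known for all $s>0$. A generalized Dirichlet series $\sum_i \lambda_i^{-s}$ with positive integer bases determines its bases with multiplicity: read off the smallest base and its multiplicity from the asymptotics as $s\to\infty$, subtract, and repeat. Hence $\lambda$ is recovered.

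\textbf{Determining $c$.} Let $s\to\infty$ in the identity: $3F(s)\to c^3-c$, which is the number of $1$'s in $M$, counted by $3F(\infty)$. For $c\ge 1$ the map $c\mapsto c^3-c$ is injective except for the collision $0\mapsto 0$, $1\mapsto 0$. So $c$ is determined unless $M$ contains no $1$.

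\textbf{The main obstacle: separating $c=0$ from $c=1$.} I would first try the smallest element of $M$ and its multiplicity. Let $q$ be the smallest part of $\lambda$ exceeding $1$, occurring $a$ times.
\begin{itemize}
\item If $c=1$, the minimum of $M$ is $q=q\cdot 1^2$, with multiplicity exactly $a$, and it is the unique element of $M$ with that minimal size.
\item If $c=0$, then every element of $M$ is at least $q^3\ge 8$, and all elements are products of three parts that are each $\ge q$.
\end{itemize}
So, with $m_0=\min M$: if $c=1$ then $m_0$ is itself a part, and $m_0^3$ is then the smallest product of three parts that are all $>1$. If $c=0$, then $m_0\ge q^3$.

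To turn this into a contradiction, suppose $\lambda$ (with $c=0$) and $\mu$ (with $c=1$) give the same $M$, and let $q$ be the smallest part of $\lambda$. Then $\mu$ has a part equal to $m_0\ge q^3$ that is its smallest part greater than $1$. Consequently every element of $M$ coming from $\mu$ that involves no $1$ is at least $m_0^3\ge q^9$. But $\mu$, having exactly one $1$ and hence $\ell-1\ge1$ other parts, contributes the elements $\mu_i^2\cdot1$ and $\mu_i\cdot 1^2$, each exactly once. I would then compare the second-smallest element of $M$, or the multiplicity pattern of the elements near $m_0$, between the two descriptions. I expect this to yield a contradiction after short casework on $a=1$ versus $a\ge2$.

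If this casework becomes unwieldy, a fallback is to compare the asymptotic expansions of $3F(s)$ directly. For $c=1$ the leading term is $3a\,q^{-s}$, while for $c=0$ it is of order $q^{-3s}$ with a constrained coefficient. These can be matched against the value of $\ell$ already known from $|M|$.
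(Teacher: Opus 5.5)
Your argument never uses the hypothesis $|\lambda|=n$. What it would actually establish is that $M=\prs_{(2,1)}(\lambda)$ determines $\lambda$ among all partitions of length at least $2$. That statement is false, and it fails exactly at the step you flagged as the main obstacle.

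Take $\lambda=(16,2,2)$, with $c=0$, and $\mu=(8,8,1)$, with $c=1$.
For $\lambda$, each of the two pairs $\{16,2\}$ contributes $512$ and $64$, the pair $\{2,2\}$ contributes $8,8$, and the triple contributes $64$ twice.
For $\mu$, the pair $\{8,8\}$ contributes $512,512$, each of the two pairs $\{8,1\}$ contributes $64$ and $8$, and the triple contributes $64$ twice.
Both multisets are $512^2\,64^4\,8^2$, while $|\lambda|=20\neq 17=|\mu|$.

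More generally, $(b^2c^2,b,c)$ and $(b^2c,bc^2,1)$ collide for all $b\ge c\ge 2$. This follows from the three-variable self-duality $(x_1x_2x_3)^2 s_{(2,1)}(x_1^{-1},x_2^{-1},x_3^{-1})=s_{(2,1)}(x_1,x_2,x_3)$. So no casework on $\min M$, and no comparison of the asymptotics of $F(s)$ as $s\to\infty$, can separate $c=0$ from $c=1$: the multiset simply does not determine $c$.

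There is a second defect at the same spot. Even if $c=0$ were known, the nested radicals with innermost value $0$ converge to a solution of $G(s)^3=3F(s)+G(3s)$ that need not be $P$. For $\lambda=(3,2)$ the limit decays like $12^{-s/3}$ rather than $2^{-s}$ for large $s$. At $s=1$ it is about $0.826$ rather than $P(1)=5/6$.

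What your argument does correctly show is that $M$ determines $\lambda$ whenever $M$ contains a $1$, i.e. when $c\ge 2$.

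The missing ingredient is $n$, and the natural place to use it is the opposite end of your identity.
\begin{itemize}
\item All sums are finite, so $3F(s)=P(s)^3-P(3s)$ holds for every real $s$, and $P(-1)=n$.
\item Running $P(3s)=P(s)^3-3F(s)$ from $s=-1$ yields every power sum $\sum_i\lambda_i^{3^a}$. Hence $\lambda_1=\lim_{a\to\infty}\bigl(\sum_i\lambda_i^{3^a}\bigr)^{3^{-a}}$.
\item You then peel off $\lambda_2,\lambda_3,\dots$ in turn. Once $\lambda_1,\dots,\lambda_{k-1}$ are known, the largest element of $M$ not accounted for by $s_{(2,1)}(\lambda_1,\dots,\lambda_{k-1})$ is $\lambda_1^2\lambda_k$.
\end{itemize}
This is exactly the paper's proof. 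It anchors the recursion at the largest part using $p_1=n$, whereas you anchor it at the number of $1$'s, which $M$ alone cannot pin down when $c\le 1$.
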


\subsection{Skew Schur polynomials}
We can generalize $\prs_{\lambda'}$ from traditional Schur polynomials over an integer partition $\lambda'$ to skew Schur polynomials over a skew shape $\lambda'/\mu'$. Thus, we define the analogous map $\prs_{\lambda'/\mu'}$. The value of this map derives from considering the geometry of a skew shape through the combinatorial definition of $s_{\lambda'/\mu'}$, allowing us to prove the following theorem:
\begin{restatable}{theorem}{skewTheorem}\label{theorem: skewTheorem}
For any skew shape $\lambda'/\mu'$ in which there is at most one square in each column and at most one square in each row, $\prs_{\lambda'/\mu'}$ is injective on $\Pn$.  
\end{restatable}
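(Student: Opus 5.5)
The plan is to first work out what $\prs_{\lambda'/\mu'}$ looks like for such a shape, and then to recover $\lambda$ from power sums. Suppose the skew shape $\lambda'/\mu'$ has $k$ boxes, no two in the same row or column. Then no two boxes are related by the row or column conditions defining a semistandard filling. So a semistandard tableau of shape $\lambda'/\mu'$ with entries in $\{1,\dots,\ell\}$ is just an arbitrary assignment of a value in $\{1,\dots,\ell\}$ to each of the $k$ boxes. The fillings are therefore in bijection with words $(i_1,\dots,i_k)\in\{1,\dots,\ell\}^k$, and
\[
s_{\lambda'/\mu'}(x_1,\dots,x_\ell)=(x_1+\cdots+x_\ell)^k .
\]
Following the definition of $\prs_{\lambda'/\mu'}$ as the analogue of $\prs_{\lambda'}$, where each tableau contributes the part given by its monomial evaluated at $\lambda$, we get the following for $\lambda=(\lambda_1,\dots,\lambda_\ell)\in\Pn$. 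The partition $\prs_{\lambda'/\mu'}(\lambda)$ is the multiset $\{\lambda_{i_1}\lambda_{i_2}\cdots\lambda_{i_k}\}$ indexed by all $\ell^k$ words. I would state this carefully as a lemma, checking it against the paper's precise convention for $\prs$, in particular that each tableau, rather than each distinct monomial, contributes a part.

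Given this description, injectivity follows from power sums. Write $p_t(\mu)=\sum_j \mu_j^t$ for a partition $\mu$. Summing the $t$-th powers of the parts of $\prs_{\lambda'/\mu'}(\lambda)$ gives
\[
p_t\bigl(\prs_{\lambda'/\mu'}(\lambda)\bigr)=\sum_{(i_1,\dots,i_k)}\prod_{r=1}^k\lambda_{i_r}^t=\Bigl(\sum_{i=1}^{\ell}\lambda_i^t\Bigr)^k=p_t(\lambda)^k ,
\]
because the sum over all words factors. Since $p_t(\lambda)>0$, the value $p_t(\lambda)$ is the unique positive $k$-th root of a quantity determined by the image. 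Hence every power sum $p_t(\lambda)$, $t\ge1$, is determined by $\prs_{\lambda'/\mu'}(\lambda)$. The length $\ell$ is also determined, since the image has exactly $\ell^k$ parts.

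To finish, I would invoke the standard fact that the power sums $p_1,\dots,p_\ell$ of $\ell$ numbers determine the multiset of those numbers. Newton's identities recover $e_1,\dots,e_\ell$ over $\mathbb{Q}$, hence the polynomial $\prod_i(z-\lambda_i)$, hence its roots with multiplicity. So if $\prs_{\lambda'/\mu'}(\lambda)=\prs_{\lambda'/\mu'}(\nu)$ for $\lambda,\nu\in\Pn$, then $\lambda$ and $\nu$ have the same length and the same power sums, and so $\lambda=\nu$. An alternative, more elementary route is to take the largest part of the image, which is $\lambda_1^k$, read off $\lambda_1$, and peel off parts inductively. The power-sum argument is cleaner, however.

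The main obstacle I expect is the first step rather than the algebra: making sure the definition of $\prs_{\lambda'/\mu'}$ really produces one part per semistandard tableau, evaluated at $\lambda$ with $\ell(\lambda)$ variables. I also need the hypothesis to force independence of all box entries, including the disconnected-component situation where boxes may be far apart but still must not share a row or column. Once the product formula $(x_1+\cdots+x_\ell)^k$ is secured, the rest is routine.
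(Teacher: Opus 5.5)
Your proof is correct, but it takes a different route from the paper. The paper makes the same first observation, that $s_{\lambda'/\mu'}(x_1,\dots,x_\ell)=(x_1+\cdots+x_\ell)^k$, and then says only that the argument for $\prh_k$ carries over. That argument runs as follows: the number of parts, $\ell^k$, fixes $\ell$, and the largest part $\lambda_1^k$ fixes $\lambda_1$. Then, inductively, once $\lambda_1,\dots,\lambda_j$ are known, you delete the sub-multiset of products whose indices all lie in $\{1,\dots,j\}$. The largest remaining part is $\lambda_1^{k-1}\lambda_{j+1}$, which fixes $\lambda_{j+1}$. This is exactly the ``alternative'' you mention.

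Your main argument is different. It uses the identity $p_t(\prs_{\lambda'/\mu'}(\lambda))=p_t(\lambda)^k$, which is the plethysm $p_t[p_1^k]=p_t^k$, followed by Newton's identities. In spirit this is closer to the paper's proof for $\prs_{(2,1)}$ via $s_{(2,1)}=\frac13(p_1^3-p_3)$ and to Li's method than to the $\prh_k$ argument.

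What your version buys:
\begin{itemize}
\item It is short and fully rigorous, whereas the paper's proof is a sketch by analogy.
\item It makes plain that the size constraint plays no role, so injectivity holds on all of $\mathcal{P}$. The peeling argument never uses $n$ either, but this is less visible there.
\end{itemize}

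What the peeling argument buys:
\begin{itemize}
\item It is more elementary and gives an explicit part-by-part reconstruction.
\item It is the template that extends to polynomials such as $h_k$, whose image power sums $h_k(\lambda_1^t,\dots,\lambda_\ell^t)$ do not invert so simply.
\end{itemize}

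One small point applies to both proofs: you need $k\ge 1$. For the empty skew shape, $s_{\lambda'/\mu'}=1$ and the map is constant, so the statement fails there. You should state that assumption explicitly.
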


\subsection{Applications}
The elementary symmetric partition function has been previously studied in the literature, though not in this exact language. For example, in 1958, Selfridge and Strauss proved that if $X = \{x_1,\dots,x_n\}$ is a set of complex numbers, and $X' = \{x_i+x_j\}_{1 \le i < j \le n}$ is the set of pairwise sums of $X$, we can uniquely retrieve $X$ from $X'$ if $n$ is not a power of 2 \cite{SelfStraus}. This result implies that $\pre_2$ is injective on the set of partitions whose length is not a power of $2$.

Our results may also be viewed through a representation-theoretic lens. See  
Section~\ref{subsec:app} for a discussion of plethysm-related consequences of our results.

\subsection{Organization of the paper}\label{sec:outline}
The remainder of this paper is organized as follows:
\begin{itemize}
\item Section~\ref{sec:background} establishes preliminary definitions, notational conventions, and summarizes prior results on Question~\ref{quest: main}.
\item Section~\ref{sec: pre} concerns $\pre_k$, giving a proof of Theorem \ref{theorem: mary} and detailing some instances where injectivity holds and fails to hold. 
\item Section~\ref{sec: prh} concerns $\prh_k$, giving an alternate proof of injectivity and a remark on the existence of an algorithm.
\item Section~\ref{sec: prs} details progress on Schur polynomials—notably a proof of Theorem~\ref{theorem: prs_(2,1)}—and expands upon applications. 
\item Section~\ref{sec: openquestions} contains open problems and future directions. 
\end{itemize}

\section{Background}\label{sec:background}

\subsection{Key definitions}\label{subsec: defin}

We first introduce the definitions and notation regarding $\pre_k$ that are relevant to this paper.

A partition $\lambda=(\lambda_1,\lambda_2,\lambda_3,\dots,\lambda_\ell)$ of $n$ is a nonincreasing sequence of positive integers that sum to $n$. The value $n$ is also called the \emph{size} of $\lambda$, and $\ell$ is called the \emph{length} of $\lambda$. We sometimes also write the size of $\lambda$ as $|\lambda|$ and the length of $\lambda$ as $\ell(\lambda)$. Let $\mathcal{P}$ be the set of all partitions, let $\Pn$ be the set of all partitions of $n$, and let $\Pln$ be the set of all partitions of $n$ of length $\ell$.

\begin{convention}
Define $\mathcal{P}_{\ge k}(n) := \mathcal{P}_k(n) \cup \mathcal{P}_{k+1}(n) \cup \cdots \cup \mathcal{P}_n(n)$. Later in this section, we define some functions indexed by $k$ with domain $\mathcal{P}_{\ge k}(n)$ for various values of $n$. For brevity, we sometimes abuse notation and write the domains as $\Pn$ instead of $\mathcal{P}_{\ge k}(n)$.
\end{convention}

\begin{convention}\label{remark: conjpart}
In the literature, the notation $\lambda'$ is sometimes reserved for the conjugate partition of $\lambda$. We do not adopt this convention and instead use $\lambda'$ to denote some other partition in statements where $\lambda$ is already used. 
We use $\lambda^T$ for the conjugate of $\lambda$.  

\end{convention}

\begin{defin}
    Let the \textit{multiplicity} $m_\lambda(a)$ of an integer $a$ in $\lambda$ be the number of times $a$ appears in $\lambda$. 
\end{defin}

We use a variant of exponential notation for partitions, in which

\[\lambda=({d_1,\dots,d_1},{d_2,\dots d_2},\dots,{d_q,\dots,d_q})\] with no two $d_i$ equal can be abbreviated as $d_1^{m_\lambda(d_1)}d_2^{m_\lambda(d_2)}\cdots d_q^{m_\lambda(d_q)}$. For instance, $(3,3,2,2,2,1)$ is abbreviated as $3^22^31^1$, with exponents of $1$ sometimes omitted.

\begin{defin}\label{defin: elempoly}
For positive integers $j,\ell$ such that $j\leq \ell$, the \emph{$j$-th elementary symmetric polynomial} is the polynomial $e_j\in \mathbb{Z}[x_1,x_2,\dots,x_\ell]$ defined by:
\[e_j(x_1,x_2,\dots,x_\ell)=\sum_{1\leq i_1<i_2<\dots<i_j\leq \ell} x_{i_1}x_{i_2}\dots x_{i_j}.\]
\end{defin}

When we say $e_k$ is evaluated at a partition $\lambda$, we mean $e_k(\lambda_1,\lambda_2,\dots,\lambda_\ell)$, where $\ell$ is the length of $\lambda$. Using these definitions, we can now define the map $\pre_k$:
\begin{defin}\cite{BBM}\label{defin: prek}
For a partition $\lambda=(\lambda_1,\lambda_2,\dots,\lambda_\ell)$ of $n$ and a positive integer $k\leq \ell$, we define $\pre_k$ as the map such that $\pre_k(\lambda)$ is the partition composed of the summands of the $k$-th elementary symmetric polynomial evaluated at $\lambda$.
\end{defin}
\begin{exam}
    When $\lambda = (4,2,1,1)$ and $k = 2$, 
    \begin{align*}
        \pre_2(\lambda) &= (\lambda_1\lambda_2, \lambda_1 \lambda_3, \lambda_1\lambda_4, \lambda_2\lambda_3, \lambda_2\lambda_4, \lambda_3\lambda_4) \\ 
        &= (8,4,4,2,2,1).
    \end{align*}
\end{exam}
\begin{exam}\label{exam: prek}

In general, the relative sizes of the parts of $\lambda$ affects the ordering of the parts of $\pre_k(\lambda)$. 
For example, 
$\lambda = (4,3,2,1)$ satisfies $\lambda_2\lambda_3 \geq \lambda_1\lambda_4$, whereas $\lambda' = (7,1,1,1)$ satisfies $\lambda'_2\lambda'_3 \le \lambda'_1\lambda'_4$.
\end{exam}

We now discuss the complete homogeneous symmetric polynomial and the map $\prh_k$, the latter of which was first defined in \cite{Ha+}.

\begin{defin}\label{defin: homopoly}
For positive integers $j,n$, 
the \emph{$j$-th complete homogeneous symmetric polynomial} is the polynomial $h_j\in \mathbb{Z}[x_1,x_2,\dots,x_n]$ defined by:
$$h_j(x_1,x_2,\dots,x_n)=\sum_{1\leq i_1\leq i_2\leq \dots\leq i_j\leq n} x_{i_1}x_{i_2}\dots x_{i_j}.$$
\end{defin}

\begin{defin}\cite{Ha+}\label{defin: prh}
For a partition $\lambda=(\lambda_1,\lambda_2,\dots,\lambda_\ell)$ of $n$ and a positive integer $k$, we define $\prh_k$ as the map such that $\prh_k(\lambda)$ is the partition composed of the summands of the $k$-th complete homogeneous symmetric polynomial evaluated at $\lambda$.
\end{defin}
\begin{exam}
Let $\lambda=(2,2)$ and $\mu=(4,1)$. By our definition of $h_k$, we have $h_2(x_1,x_2) = x_1^2 + x_1x_2 + x_2^2$. Note that $\prh_2(\lambda) = (4,4,4)$ and $\prh_2(\mu) = (16,4,1)$. 
\end{exam}

Finally, we define the skew Schur polynomial $s_{\lambda/\mu}$ and the map $\prs_{\lambda' /\mu'}$. 

\begin{defin}\label{defin: young}

A \emph{Young diagram} for a partition $\lambda = (\lambda_1,\dots,\lambda_\ell)$ is a collection of boxes arranged in left-justified rows with $\ell$ total rows, and with row $i$ having $\lambda_i$ boxes for each $i$.
\end{defin}

\begin{defin}\label{defin: skew}
The \textit{skew shape} $\lambda/\mu$ is a pair of partitions $\lambda,\mu$ with $\ell(\lambda) \ge \ell(\mu)$ and $\lambda_i \ge \mu_i$ for all $i \in \{1,\dots,\ell(\mu)\}$. The \textit{skew diagram} of $\lambda/\mu$ is
the shape defined by the set of squares that belong to the Young diagram of $\lambda$ but not of $\mu$. 
\end{defin}

\begin{defin}\label{defin: SSYT}
A \textit{skew semi-standard Young tableau} of shape $\lambda/\mu$ is a filling of the squares of $\lambda/\mu$ with positive integers satisfying the following two conditions:
\begin{enumerate}
\item the entries are weakly increasing along every row and 
\item the entries are strictly increasing down every column.
\end{enumerate}
When $\mu = \emptyset$, we use the terminology ``semi-standard Young tableau (SSYT) of shape $\lambda$."
\end{defin}
\begin{exam}
Here is a (semi-)standard skew Young tableau of shape $\lambda /\mu$ for $\lambda = (5,4,2,2)$, $\mu = (2,1)$.
    
    \[
\begin{ytableau}
 \none & \none &  2&5 & 9 \\ 
 \none & 3 & 4 & 6 \\
 1 & 7 \\
 8 & 10\\
\end{ytableau}
\]
\end{exam}

Let a given skew semi-standard Young tableau $T$ of shape $\lambda /\mu$ have \textit{type} $\alpha=(\alpha_1,\alpha_2,\dots)$, where $\alpha_i$ is the number of entries in the tableau equal to $i$. Write $x^T=x_1^{\alpha_1}x_2^{\alpha_2}\dots$. We now define the skew Schur polynomial.

\begin{defin}
The \textit{skew Schur polynomial} $s_{\lambda/\mu}(x)$ for skew shape $\lambda/\mu$ and $x=(x_1,x_2,\dots,x_\ell)$ is defined as
\[s_{\lambda/\mu}(x)=\sum_{T} x^T\]
where the sum runs over all SSYTs $T$ of shape $\lambda/\mu$ with entries in $\{1,2,\dots,\ell\}$. Note that $\ell$ does not have to equal $|\lambda| - |\mu|$, but $\ell$ must be at least the maximum column height of $\lambda /\mu$. We define the usual {Schur polynomial} $s_\lambda$ as $s_{\lambda / \emptyset}$.
\end{defin}

\begin{exam}\label{exam: prs21}
Let $\lambda=(2,1)$ and consider the set of $n$ variables $x=(x_1,x_2,\dots,x_n)$. Now, consider three integers $1\leq i,j,k\leq n$ such that $i<j<k$. The SSYTs of shape $\lambda$ can be generated as follows:
\[
\begin{ytableau}
i&j \\ 
k\\
\end{ytableau}
\quad
\begin{ytableau}
i&k \\ 
j\\
\end{ytableau}
\quad
\begin{ytableau}
i&i \\ 
j\\
\end{ytableau}
\quad
\begin{ytableau}
i&j \\ 
j\\
\end{ytableau}
\] 

Thus, it follows:
\[s_{(2,1)}(x_1,\dots,x_n)=\sum_{1\leq i<j\leq n} (x_i^2x_j+x_ix_j^2)+2\sum_{1\leq i<j<k\leq n} x_ix_jx_k.\]
\end{exam}
With this framework, we define the maps $\prs_{\lambda'}$ and $\prs_{\lambda'/\mu'}$.

\begin{defin}\cite{Ha+}\label{defin: prs}
For a partition $\lambda=(\lambda_1,\lambda_2,\dots,\lambda_\ell)$ of $n$ and an arbitrary partition $\lambda'$ for which $s_{\lambda'}(\lambda)$ is defined and nonzero, we define $\prs_{\lambda'}(\lambda)$ to be the partition composed of the summands of $s_{\lambda'}(\lambda)$. Note that if $s_{\lambda'}$ has a term of coefficient $m>1$, then that term appears $m$ times in $\prs_{\lambda'}$.

\end{defin}

\begin{defin}\label{defin: prsskew}

For a partition $\lambda=(\lambda_1,\lambda_2,\dots,\lambda_\ell)$ of $n$ and an arbitrary skew partition $\lambda'/\mu'$ for which $s_{\lambda'/\mu'}(\lambda)$ is defined and nonzero, we define $\prs_{\lambda'/\mu'}(\lambda)$ to be the partition composed of the summands of $s_{\lambda'/\mu'}(\lambda)$. Note that if $s_{\lambda'/\mu'}$ has a term of coefficient $m>1$, then that term appears $m$ times in $\prs_{\lambda'/\mu'}$.

\end{defin}

\begin{remark}
Since $e_k = s_{(1^k)}$ and $h_k = s_{(k)}$, $\prs_{(1^k)} = \pre_k$ and $\prs_{(k)} = \prh_k$.

\end{remark}

\begin{lem}[Jacobi-Trudi identities]\label{lem: jacobitrudi}

Consider a partition $\lambda=(\lambda_1,\lambda_2,\dots,\lambda_\ell)$. Let the conjugate partition of $\lambda$ be $\lambda^T=(\lambda_1^T,\lambda_2^T,\dots,\lambda_{\ell'}^T)$, and let $\ell' = \ell(\lambda^T)$. 
We then have 
\[s_{\lambda}=\det(h_{\lambda_i+j-i})_{i,j=1}^{\ell},\]
\[s_{\lambda}=\det(e_{\lambda_i^T+j-i})_{i,j=1}^{\ell'},\]
where if $t = 0$, we set $h_t=e_t=1$, and if $t<0$, we set $h_t=e_t=0$.
Additionally, if we consider $s_\lambda$ over $m$ variables $x:=(x_1,x_2,\dots,x_m)$, then we have 
\[s_{\lambda}=\frac{\det(x_j^{\lambda_i+m-i})_{i,j=1}^m}{\det(x_j^{m-i})_{i,j=1}^m}.\]
Note that if $i>\ell$, then $\lambda_i$ is set to be $0$.
\end{lem}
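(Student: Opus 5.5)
The Jacobi--Trudi identities are classical, so rather than reprove them from scratch I would cite a standard reference such as \cite{EC2} or \cite{Sagan}. For completeness, the plan below is the standard route, arranged so that each identity follows from the previous one.

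I would first establish the bialternant formula $s_\lambda=a_{\lambda+\delta}/a_\delta$, where $a_\alpha=\det(x_j^{\alpha_i})_{i,j=1}^m$ and $\delta=(m-1,m-2,\dots,0)$. The cleanest combinatorial route is the Lindstr\"om--Gessel--Viennot lemma. Encode each SSYT of shape $\lambda$ with entries in $\{1,\dots,m\}$ as a family of $\ell$ nonintersecting lattice paths, one per row. The path for row $i$ starts at $(m-i,\ldots)$-type sources and makes a horizontal step at height $c$ for each entry equal to $c$. Because the columns are strictly increasing, the paths do not intersect. LGV then gives the first identity, $s_\lambda=\det(h_{\lambda_i+j-i})$, since the weighted count of single paths between the relevant source and sink is exactly $h_{\lambda_i+j-i}$.

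For the bialternant formula, I would then show that $a_{\lambda+\delta}/a_\delta$ satisfies the same determinant. Expand by the generating function $\prod_i(1-x_it)^{-1}=\sum h_rt^r$ and use the matrix factorization $(x_j^{\alpha_i})=(h_{\alpha_i-m+k}(\ldots))\cdot(\ldots)$. A cleaner alternative is to show directly that $a_\delta s_\lambda=a_{\lambda+\delta}$ via the Bender--Knuth involution, which proves that $s_\lambda$ is symmetric, followed by a sign-reversing involution argument.

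The second, dual identity follows from the first by applying the involution $\omega$ on symmetric functions that swaps $h_r$ and $e_r$. This requires two ingredients. The first is that $\omega$ is well defined as a ring involution, which follows from $\sum_r(-1)^re_rh_{N-r}=0$. The second is that $\omega s_\lambda=s_{\lambda^T}$. Alternatively, one can avoid $\omega$ and prove the identity via the complementary-minor identity for the inverse of the Toeplitz matrices $(h_{j-i})$ and $(( -1)^{j-i}e_{j-i})$, which are mutually inverse.

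The main obstacle is showing that $\omega s_\lambda=s_{\lambda^T}$, equivalently the complementary-minor step, since it requires careful bookkeeping of which rows and columns correspond to $\lambda$ and which to $\lambda^T$. The standard trick here is that the sets $\{\lambda_i+\ell-i\}$ and $\{\ell-1+j-\lambda^T_j\}$ partition $\{0,\dots,\ell+\ell'-1\}$. With that fact in hand, the rest is routine. I would present all of this only as a cited background lemma rather than in full.
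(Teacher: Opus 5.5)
Your proposal is correct and takes essentially the paper's approach: the paper records the Jacobi--Trudi identities and the bialternant formula as classical background without proof, and you propose to cite them as well. Your sketched route is the standard textbook argument and is sound. It uses Lindstr\"om--Gessel--Viennot for the $h$-determinant, and the factorization $a_{\lambda+\delta}=a_\delta\det(h_{\lambda_i-i+j})$ for the bialternant (with $m\ge\ell$, which the paper's definition of $s_\lambda$ implicitly requires). For the $e$-determinant it uses complementary minors of the mutually inverse Toeplitz matrices, together with the fact that $\{\lambda_i+\ell-i\}$ and $\{\ell-1+j-\lambda^T_j\}$ partition $\{0,\dots,\ell+\ell'-1\}$.
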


\begin{convention}
    Let $f$ be $\pre_k$, $\prh_k$, or $\prs_{\lambda'/\mu'}$. Given some input partition $\lambda$, we usually denote $f(\lambda)$ by $\nu$. 
\end{convention}
\begin{convention}
    We set $\binom{n}{k} = 0$ if $n<k$.
\end{convention}

  \begin{prop}\textup{\cite{Bplus}}
      Let $\lambda$ be a partition with $m_\lambda(1) > 0$, and let $\nu = \pre_k(\lambda)$. Then $m_{\nu}(1)=\binom{m_\lambda(1)}{k}$.
  \end{prop}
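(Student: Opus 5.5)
The plan is to count directly which products in $e_k(\lambda)$ equal $1$. Write $\lambda=(\lambda_1,\dots,\lambda_\ell)$, so that $\nu=\pre_k(\lambda)$ consists of the products $\lambda_{i_1}\lambda_{i_2}\cdots\lambda_{i_k}$, one for each $k$-subset $\{i_1<\dots<i_k\}\subseteq\{1,\dots,\ell\}$. Each such subset contributes exactly one part to $\nu$, counted with multiplicity. Hence $m_\nu(1)$ equals the number of $k$-subsets of indices whose product is $1$.

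The key observation is that every $\lambda_i$ is a positive integer. A product of positive integers equals $1$ if and only if every factor equals $1$: if some factor is at least $2$, then, since the remaining factors are at least $1$, the product is at least $2$. Therefore a $k$-subset of indices gives a part equal to $1$ exactly when all of its indices lie in $\{i : \lambda_i = 1\}$. This index set has exactly $m_\lambda(1)$ elements, consisting of the last $m_\lambda(1)$ positions of $\lambda$.

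Counting the $k$-subsets of that set gives $m_\nu(1)=\binom{m_\lambda(1)}{k}$. When $m_\lambda(1)<k$, no such subset exists, and the paper's convention $\binom{n}{k}=0$ for $n<k$ makes the formula read $0$, as it should. The hypothesis $m_\lambda(1)>0$ is not needed for the argument; it only reflects the setting in which the statement is used.

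I do not expect a real obstacle. The only point requiring care is bookkeeping: parts of $\nu$ are indexed by subsets of positions, not by multisets of values. Repeated $1$'s in $\lambda$ are therefore distinguished by position, and that is exactly what produces the binomial count.
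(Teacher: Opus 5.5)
Your proof is correct and complete. The paper gives no proof of its own but cites \cite{Bplus}, and your direct count is the standard argument: a product of positive integers equals $1$ exactly when every factor is $1$, so the parts of $\nu$ equal to $1$ correspond to $k$-subsets of the $m_\lambda(1)$ positions holding a $1$. The paper uses the same kind of count for $\prh_k$ in Lemma~\ref{lem: prhalg}. Your remark that the hypothesis $m_\lambda(1)>0$ is superfluous, given $\ell(\lambda)\ge k$ so that $\pre_k(\lambda)$ is defined, is also right.
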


\subsection{Other definitions}
\begin{defin}\cite{Bplus}\label{defin: classofpartition}
Define $\mathcal{S}_k$ as the set of partitions  such that, for each $\lambda\in \mathcal{S}_k$, either the number of $1$s in $\lambda$ is at least $k$, or the number of $1$s in $\lambda$ is $k-1$ and there exists some prime $p$ for which the number of $p$s in $\lambda$ is positive.
\end{defin}
\begin{defin}
Define the \textit{$k$-ary partitions} of $n$, denoted $\mathcal{B}_k(n)$, as the set of all partitions of $n$ whose parts are all powers of $k$.
\end{defin}
The latter definition is a slight modification of a notational convention in \cite{BBM}, where $\mathcal{B}_2(n)$ is simply denoted as $\mathcal{B}(n)$.

\begin{defin}\cite{DeEy}\label{defin: pren}
Let $Pre_k(n)$ be the set of partitions of $n$ that lie in the image of $\pre_k(\lambda)$ for $\lambda\in\mathcal{P}$. Define $\pre_k(n)$ as the cardinality of $Pre_k(n)$.
\end{defin}

\subsection{Conventions}\label{subsec: notations}

As noted by Ballantine, Beck, and Merca~\cite{BBM}, if $\lambda$ and $\mu$ have different lengths, it is impossible for $\pre_k(\lambda)$ to equal $\pre_k(\mu)$. This property also applies to $\prh_k$ and $\prs_{\lambda'}$ in the cases where we need it. Thus, in our proofs of injectivity, we often start by assuming that if $f(\lambda) = f(\mu)$ for $f$ some function on partitions, then $\ell(\lambda) = \ell(\mu)$.

\section{Elementary symmetric polynomials}\label{sec: pre}
\subsection{Non-injective families of partitions}\label{subsec: neg}
Note that Theorem \ref{theorem: main} was proven by Hadelyn et al.~\cite{Ha+} where they proved the more general case that there are infinitely many pairs of partitions $\lambda,\mu$ of length $2j$ for which $\pre_j(\lambda)=\pre_j(\mu)$. However, we give our progress as it was derived independently of them. We first disprove Conjecture~\ref{conj: main} by generating an infinite family of counterexamples.  To motivate this, we provide an explicit counterexample:
\begin{exam}\label{exam: counter}
Consider $\lambda = (40,16,16,16,5,5)$ and $\mu = (32,32,10,10,10,4)$. Note that both $\lambda$ and $\mu$ are partitions of $98$. Furthermore, it follows:
\[\pre_3(\lambda)=10240^34096^13200^61280^61000^1400^3=\pre_3(\mu).\]
Thus, $\pre_3$ is not injective on $\mathcal{P}(98)$.
\end{exam}

\mainTheorem*

\begin{proof}
Consider $\lambda:=(a,b,b,b,c,c)$ and $\mu:=(d,d,e,e,e,f)$ for $a,b,c,d,e,f\in\ZZ^{+}$. It follows that
\[\pre_3(\lambda)=(ab^2)^3(abc)^6(ac^2)^1(b^3)^1 (b^2c)^6 (bc^2)^3,\]
\[\pre_3(\mu)=(d^2e)^3(def)^6(d^2f)^1(e^3)^1(de^2)^6(e^2f)^3.\]

In order to directly compare these two partitions in an efficient manner, we can set terms of the same multiplicity equal. Mimicking the structure of the given counterexample, we obtain the following system of equations:
\[ab^2=d^2e,\]
\[b^3=d^2f,\]
\[bc^2=e^2f,\]
\[ac^2=e^3,\]
\[abc=de^2,\]
\[b^2c=def.\]
Using these six equations, we can restrict the problem to three degrees of freedom:
\[a=\frac{e^3}{c^2}, b=\frac{e^2f}{c^2}, d=\frac{e^3f}{c^3}.\]
Now, as $\lambda$ and $\mu$ are partitions of the same number, it follows that $a+2c+3b=2d+3e+f$. Substituting our achieved values for $a,b,d$, we can simplify this equation as follows:
\[f=\frac{3ec^3-2c^4-ce^3}{3ce^2-c^3-2e^3}.\]

Let $c=k$ and $e=2k$ for $k\in \ZZ^+$. Then, $f$ simplifies to $\frac45k$. Thus, let $k=5q$ for positive integers $q$, so $f=4q$. Thus, $a=40q, b=16q, d=32q$, all of which are integers. Thus, by varying $q$ over the positive integers, we generate infinitely many partitions $\lambda,\mu\in\Pn$ for which $\pre_k(\lambda)=\pre_k(\mu)$.
\end{proof}

\begin{remark}\label{remark: othercounter}
Note that there are many more counterexamples of this type. For instance, if $\lambda=(189, 45, 45, 45, 7, 7)$ and $\mu=(135, 135, 21, 21, 21, 5)$, we may calculate that $\lambda,\mu\in \mathcal{P}(338)$ and $\pre_3(\lambda)=\pre_3(\mu)$. 
\end{remark}

\begin{remark}\label{remark: P3}
The existence of such counterexamples motivates a new question: how often is $\pre_k$ injective? Devnani and Eyyunni~\cite{DeEy} asked this as an open question for the case of $\mathcal{P}_k(n)$. It turns out that in this scenario, $\pre_k$ is ''almost" never injective. Particularly, for sufficiently large $n$ relative to $k$, $\pre_k$ fails to be injective on $\mathcal{P}_k(n)$. We prove this now, starting with the case of $k=3$. 
\end{remark}

\begin{lem}\label{lem: counter3}
The map $\pre_3$ is not injective on partitions of $n$ with $3$ parts when $n > 18$.
\end{lem}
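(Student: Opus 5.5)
Since every partition of $n$ with exactly $3$ parts has $e_3(\lambda_1,\lambda_2,\lambda_3)=\lambda_1\lambda_2\lambda_3$ as its only summand, $\pre_3(\lambda)$ is the one-part partition $(\lambda_1\lambda_2\lambda_3)$. The plan is therefore to show that for every $n>18$ there are two \emph{distinct} multisets $\{a_1,a_2,a_3\}$ and $\{b_1,b_2,b_3\}$ of positive integers with the same sum $n$ and the same product. I will build these from a single linear family rather than case-by-case.

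The construction is a ``rotation of multipliers''. Take $\lambda=(x,2y,3z)$ and $\mu=(2x,3y,z)$, sorted into nonincreasing order. The products agree, both being $6xyz$. The sums agree exactly when $x+2y+3z=2x+3y+z$, i.e.\ when $2z=x+y$. So I parametrize $x=c-d$, $y=c+d$, $z=c$, which gives
\[\lambda=\{c-d,\;2(c+d),\;3c\},\qquad \mu=\{2(c-d),\;3(c+d),\;c\},\qquad |\lambda|=|\mu|=6c+d.\]
All parts are positive provided $c>|d|$. This gives $\pre_3(\lambda)=\pre_3(\mu)$ for all such $c,d$, as long as the two multisets actually differ.

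The next step is covering every $n>18$. I allow $d\in\{-3,-2,\dots,3\}$, so every residue class modulo $6$ is available, some in two ways. For each $n$ I then choose $c$ and $d$ with $n=6c+d$ and $c>|d|$. For example, $19=6\cdot3+1$, $21=6\cdot4-3$, $22=6\cdot4-2$, and $27=6\cdot4+3$. For all large $n$ both choices of $d$ in a residue class work, so only finitely many small $n$ need explicit checking. For those I will pick whichever representation satisfies $c>|d|$ and, if necessary, exhibit an explicit sporadic pair. One such pair is $(6,6,1)$ versus $(9,2,2)$, which both have sum $13$ and product $36$, and scaling it gives further pairs. Scaling a pair by $t$ multiplies both sums by $t$ and both products by $t^3$, so equalities are preserved.

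The step I expect to be the main obstacle is ensuring that $\lambda\neq\mu$ as multisets. Equality would force the sorted triples to coincide, which gives a small number of linear conditions on $(c,d)$. For instance, matching $c$ with $2(c-d)$ gives $c=2d$, and matching $3c$ with $3(c+d)$ gives $d=0$. I would list the possible bijections between the parts, solve each resulting linear system, and check that equality happens only for $d=0$ or for a few isolated $(c,d)$. When $n$ hits such a bad case, I switch to the other admissible $d$ in the same residue class, or use the sporadic pair above. Finally, I would confirm by a short direct check that the threshold $n>18$ is where these families start covering everything. The statement only claims non-injectivity above $18$, so I only need existence for $n\ge19$.
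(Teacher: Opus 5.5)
There is a genuine gap, in the covering step. Your family is right: with $d=\pm1$ it is exactly the paper's construction, which the paper applies only to primes $p\ge 13$ and then extends by scaling and hand checks. The failure is the residue class $n\equiv 0\pmod 6$. With $d\in\{-3,\dots,3\}$, the only representation of such $n$ is $d=0$, and $d=0$ always gives $\lambda=\mu=\{c,2c,3c\}$, as you yourself note. There is no ``other admissible $d$'' in that class; only $n\equiv 3$ has two choices. Scaling the sporadic pair $(6,6,1),(9,2,2)$ reaches only multiples of $13$. So infinitely many $n$ ($24,30,36,42,\dots$) are left uncovered, and your claim that only finitely many small $n$ need explicit checking is false as stated.

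The repair is short. Allow any $d$ with $0<|d|<c$. Checking the six bijections between $\{c-d,\,2(c+d),\,3c\}$ and $\{2(c-d),\,3(c+d),\,c\}$ shows the multisets coincide exactly when $d=0$ or $c=-2d$, i.e.\ $n=11|d|$ with $d<0$.

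\begin{itemize}
\item For $n\equiv 0\pmod 6$, take $d=-6$. This is valid for $n\ge 36$ and degenerate only at $n=66$. There, $d=6$, $c=10$ gives $(32,30,4)$ versus $(48,10,8)$, both of sum $66$ and product $3840$.
\item Your own example $22=6\cdot4-2$ is the degenerate case $c=-2d$, giving $\lambda=\mu=\{12,6,4\}$. In fact $22$ has no nondegenerate representation in the family.
\end{itemize}

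You therefore need explicit pairs for three values:
\begin{itemize}
\item $n=22$: $(10,6,6)$ versus $(9,8,5)$.
\item $n=24$: $(16,5,3)$ versus $(12,10,2)$.
\item $n=30$: $(21,5,4)$ versus $(20,7,3)$.
\end{itemize}
With these, your residue-class argument covers every $n\ge 19$ directly. That route avoids the paper's reliance on prime factorizations, scaling, and case-by-case checks, because the family already works for all $n\equiv\pm1\pmod 6$, not just primes.
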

\begin{proof}
Consider any prime $p\geq 13$. Recall that every prime $p\geq 3$ is of the form $6q+1$ or $6q-1$ for some integer $q$.

Suppose $p=6q+1$. Thus, consider $\lambda=(q,2q-2,3q+3)$ and $\mu=(q-1,2q+2,3q)$, both of which are partitions of $p$. Note the following equality:
\[\pre_3(\lambda)=q(2q-2)(3q+3)=(q-1)(2q+2)(3q)=\pre_3(\mu).\]
Thus, $\pre_3$ is not injective on $\mathcal{P}_3(p)$.

Now, suppose $p=6q-1$. Using an argument similar to above, letting $\lambda=(q,2q+2,3q-3)$ and $\mu=(q+1,2q-2,3q)$, it follows that $\pre_3(\lambda)=\pre_3(\mu)$. Thus, $\pre_3$ is not injective on $\mathcal{P}_3(p)$ for $k\geq 2$. Therefore, $\pre_3$ is not injective on $\mathcal{P}_3(p)$ for any prime $p\geq 13$.

Furthermore, by manually checking, we can see that $\pre_3$ is not injective on $\mathcal{P}_3(n)$ when $n=16, 25, 27,$ and $49$.

Now consider two partitions $\lambda=(a_1,b_1,c_1), \mu=(a_2,b_2,c_2)\in \mathcal{P}(n)$ such that $\pre_3(\lambda)=\pre_3(\mu)$ and $n=16,25,27,49,$ or a prime $p\geq 13$. Note, for any positive integer $k$, it follows that $k\lambda=(ka_1,kb_1,kc_1)$ and $k\mu=(ka_2,kb_2,kc_2)$ are partitions of $kn$ satisfying $\pre_3(k\lambda)=\pre_3(k\mu)$. Therefore, $\pre_3$ is not injective on $\mathcal{P}_3(kn)$. Thus, it follows that $\pre_3$ is not injective on partitions of any number of the form $2^{a_1}\cdot 3^{a_2}\cdot 5^{a_3}\cdot 7^{a_4}\cdot 11^{a_5}\cdots$ where $a_1\geq 4, a_2\geq 3, a_3\geq 2, a_4\geq 2,$ and $a_i\geq 0$ for $i\geq 5$. For the numbers with $a_1< 4, a_2< 3, a_3< 2, a_4<2$ and $a_i=0$ for $i\geq 5$, we can manually check their injectivity on $3$ parts to see that $\pre_3$ is not injective when $n=14,16$ and when $n>18$. Thus, $\pre_3$ is injective if and only if $3\leq n\leq 12$ and $n=15,18$.
\end{proof}

With this result, we can now prove the following theorem:
\begin{theorem}\label{theorem: counter}
When $k \ge 3$, the map $\pre_k$ is not injective on partitions of $n$ with $k$ parts when $n > k + 15$.
\end{theorem}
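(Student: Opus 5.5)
Lemma~\ref{lem: counter3} handles $k=3$, and I would reduce general $k$ to it by padding with $1$'s. For a partition $\lambda$ of length exactly $k$, $\pre_k(\lambda)$ is the single part $\lambda_1\lambda_2\cdots\lambda_k$. So $\pre_k$ fails to be injective on $\mathcal{P}_k(n)$ exactly when there are two distinct partitions of $n$ into $k$ parts with the same product of parts.

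Suppose $k\ge 3$ and $n>k+15$, and set $n' := n-(k-3)$. Then $n' > 18$. By Lemma~\ref{lem: counter3}, $\pre_3$ is not injective on $\mathcal{P}_3(n')$, so there exist distinct partitions $(a_1,b_1,c_1)\neq(a_2,b_2,c_2)$ of $n'$ with $a_1b_1c_1=a_2b_2c_2$. Append $k-3$ parts equal to $1$ to each, giving
\[\lambda=(a_1,b_1,c_1,1^{k-3}),\qquad \mu=(a_2,b_2,c_2,1^{k-3}).\]
Both $\lambda$ and $\mu$ have length exactly $k$ and size $n'+k-3=n$. Their products agree, so $\pre_k(\lambda)=\pre_k(\mu)$.

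It remains to check that $\lambda\neq\mu$. The multisets $\{a_1,b_1,c_1\}$ and $\{a_2,b_2,c_2\}$ differ, and adding the same multiset $\{1^{k-3}\}$ to both preserves the difference. Since partitions are determined by their multisets of parts, $\lambda\neq\mu$. This gives non-injectivity on $\mathcal{P}_k(n)$ for every $n>k+15$.

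The only step with real content is the bound $n>18$ in Lemma~\ref{lem: counter3}, which I am assuming as stated (including its manual checks). The shift $n\mapsto n-(k-3)$ turns the threshold $18$ into $k+15$, matching the statement exactly.
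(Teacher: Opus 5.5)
Your proposal is correct and matches the paper's own proof: take a colliding pair in $\mathcal{P}_3(n-k+3)$ from Lemma~\ref{lem: counter3}, append $k-3$ ones to each, and note that the products still agree. Your explicit check that the padded partitions remain distinct is a small detail that the paper leaves implicit.
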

\begin{proof}
For integers $k\geq 3$ and $n\geq k$, consider partitions $\lambda,\mu\in \mathcal{P}_3(n-k+3)$ such that $\pre_3(\lambda)=\pre_3(\mu)$, which exist by the previous result. Now consider $\lambda',\mu'\in \mathcal{P}(n)$ which are generated by adding $k-3$ $1$'s to $\lambda,\mu$, respectively. It follows that $\pre_k(\lambda')=\pre_k(\mu')$. Thus, by the previous result, it follows $\pre_k$ is not injective on partitions of $n>k+15$ of length $k$, as desired.
\end{proof}

\begin{remark}\label{remark: compcheck}
Through a computational check, one can make the observation that there are exactly $8$ values of $n$ for which $\pre_k$ is injective on $\mathcal{P}_k(n)$ for $k\geq 5$. This empirically suggests that $\pre_k$ is essentially never injective on $\mathcal{P}_k(n)$. 
\end{remark}

Furthermore, Devnani and Eyyunni~\cite{DeEy} offered the question of whether a lattice type technique could be feasible for proving injectivity of $\pre_2$. They offered such a technique for partitions of length $4,5,$ and $6$, and asked the open question of whether this is possible for other lengths. We offer a intuitive reason for why it is likely not possible:
\begin{remark}\label{remark: latticefail}
There are only two possibilities that could arise from employing this lattice-type method: one-by-one casework or a nice set of cases which eliminates all others. 

The former approach is computationally unfeasible. Particularly, there are $\binom{\ell}{2}$ elements in $\pre_2$, requiring exponential time to check each case. Note that this method was what \cite{DeEy} used. Thus, this is infeasible if we were to do so for general $\ell$. 

In the latter case, this would mean injectivity would follow under a significantly smaller set than all of $\pre_2$. One way to think about this is by looking at each 'level' of the lattice structure, and seeing if that is sufficient to show injectivity. However, as shown by empirical results, this is not sufficient to show injectivity on all of $\pre_2$.
\end{remark}

\subsection{Injective families of partitions}\label{subsec: good}
While the conjecture guiding this research problem, Conjecture~\ref{conj: main}, turned out to be false, there are still many positive results associated with injectivity on $\pre_k$. We begin by extending Lemma 2.1 of Li~\cite{Li}.
\begin{lem}\label{lem: liextension}
For integer partitions $\lambda=(\lambda_1,\lambda_2,\dots,\lambda_\ell)$ and $\mu=(\mu_1,\mu_2,\dots,\mu_{\ell})$ of $n$, if $\lambda_i=\mu_i$ for $1\leq i\leq k-1$ and $\pre_k(\lambda)=\pre_k(\mu)$, then $\lambda=\mu$.
\end{lem}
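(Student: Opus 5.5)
The plan is a direct extremal argument. Suppose $\pre_k(\lambda)=\pre_k(\mu)$ with $\lambda_i=\mu_i$ for $1\le i\le k-1$, both partitions having length $\ell$ as in the statement, and suppose for contradiction that $\lambda\neq\mu$. Let $j$ be the smallest index with $\lambda_j\neq\mu_j$. By hypothesis $j\ge k$. Without loss of generality $\lambda_j>\mu_j$.

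First I would split the multiset $\pre_k(\lambda)$ into two sub-multisets according to the index set $S=\{i_1<\dots<i_k\}$ of each product:
\[A_\lambda=\{\textstyle\prod_{i\in S}\lambda_i : S\subseteq\{1,\dots,j-1\}\},\qquad B_\lambda=\{\textstyle\prod_{i\in S}\lambda_i : S\not\subseteq\{1,\dots,j-1\}\}.\]
I split $\pre_k(\mu)$ into $A_\mu$ and $B_\mu$ in the same way. Since $\lambda_i=\mu_i$ for all $i<j$, we have $A_\lambda=A_\mu$ as multisets; this set is empty if $j=k$. Cancelling this common part from the equality $\pre_k(\lambda)=\pre_k(\mu)$ gives $B_\lambda=B_\mu$ as multisets. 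Both are nonempty, because $j\le\ell$ and $j\ge k$, so the set $\{1,\dots,k-1,j\}$ is a valid $k$-subset of $\{1,\dots,\ell\}$.

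Next I would compute the maximum of each $B$. Any $S$ containing an index $\ge j$ has its largest element $i_k\ge j$. Its other $k-1$ indices satisfy $i_r\ge r$. Since the parts are nonincreasing and positive, $\prod_{i\in S}\lambda_i\le \lambda_1\cdots\lambda_{k-1}\lambda_j$. Here we use $i_k\ge j$, hence $\lambda_{i_k}\le\lambda_j$, together with $\lambda_{i_r}\le\lambda_r$. Therefore
\[\max B_\lambda=\lambda_1\cdots\lambda_{k-1}\lambda_j,\qquad \max B_\mu=\mu_1\cdots\mu_{k-1}\mu_j=\lambda_1\cdots\lambda_{k-1}\mu_j.\]
All parts are positive and $\lambda_j>\mu_j$, so $\max B_\lambda>\max B_\mu$. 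This contradicts $B_\lambda=B_\mu$, and hence $\lambda=\mu$.

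The only delicate step is the bound on $\max B$. One must check that the subset $\{1,\dots,k-1,j\}$ is admissible and attains the maximum, which uses $j\ge k$ so that $j\notin\{1,\dots,k-1\}$. One must also justify the cancellation of the common sub-multiset. I expect both to be routine, and the sum condition $|\lambda|=|\mu|$ is not needed.
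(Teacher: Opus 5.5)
Your proposal is correct and is essentially the paper's argument. The paper inducts on the index: it removes the common sub-multiset $M_{k,k'}$ of products over indices below $k'$ and compares the maxima $\lambda_1\cdots\lambda_{k-1}\lambda_{k'}$ of what remains. That is exactly your $A/B$ split at the first differing index $j$, and your case $j=k$ (with $A$ empty) is the paper's base case.
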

\begin{proof}
We use strong induction on $\ell$. For our base case, suppose $\ell=k$. Note a maximum element in $\pre_k(\lambda)$ is $\lambda_1\lambda_2\dots\lambda_{k-1}\lambda_k$.
Also, a maximum element in $\pre_k(\mu)$ is
\[\mu_1\mu_2\dots\mu_{k-1}\mu_k=\lambda_1\lambda_2\dots\lambda_{k-1}\mu_k,\]
which follows from the inductive hypothesis $\lambda_i=\mu_i$ for all $1\leq i\leq k-1$. As $\pre_k(\mu)=\pre_k(\lambda)$, the maximum elements of each set will be equal. Thus, $\lambda_k=\mu_k$.

Now, assume $\lambda_i=\mu_i$ for all $1\leq i\leq k'-1$ and that $k'\geq k+1$. Consider the following multiset:
\[M_{k,k'}(\lambda):=\{\{\lambda_{i_1}\lambda_{i_2}\dots\lambda_{i_k}: 1\leq i_1<i_2<\dots<i_k\leq k'-1\}\}.\] 
Thus, by the inductive hypothesis, $M_{k,k'}(\lambda)=M_{k,k'}(\mu)$. Therefore, it follows:
\[\pre_k(\lambda)\setminus M_{k,k'}(\lambda)=\pre_k(\mu)\setminus M_{k,k'}(\mu).\]
This means that the maximum elements of both of the above sets are equal.

Now, consider the element $a=\lambda_1\lambda_2\dots \lambda_{k-1}\lambda_{k'}\in \pre_k(\lambda)$, which we claim to be the maximum element of $\pre_k(\lambda)\setminus M_{k,k'}(\lambda)$. Consider an arbitrary element $s=\lambda_{i_1}\lambda_{i_2}\dots \lambda_{i_k}\in\pre_k(\lambda)$. Suppose for the sake of contradiction that $s$ is the maximum element. By definition, $\lambda_j\geq\lambda_{i_j}$. Note $\lambda_{k'}\geq \lambda_{i_k}$ if $i_k\geq k'$. Therefore, $i_k<k'$ is a necessary condition for $s>a$. However, if $i_k<k'$, then $s\in M_{k,k'}(\lambda)$, which is a contradiction. Thus, $a$ is a maximum element. Likewise, we can say that $\mu_1\mu_2\dots \mu_{k-1}\mu_{k'}$ is a maximum element of $\pre_k(\mu)\setminus M_{k,k'}(\mu)$.

Thus, it follows:
\[\lambda_1\lambda_2\dots \lambda_{k-1}\lambda_{k'}=\mu_1\mu_2\dots\mu_{k-1}\mu_{k'}=\lambda_1\lambda_2\dots\lambda_{k-1}\mu_{k'}.\]
Hence, $\lambda_{k'}=\mu_{k'}$. Therefore, by strong induction, we conclude $\lambda=\mu$, as desired.
\end{proof}

We now offer a proof of Theorem~\ref{theorem: length}, which was obtained independently of Devnani and Eyyunni~\cite{DeEy}, in order to offer an analog to a theorem we prove about Schur polynomials in Section~\ref{sec: prs}.

\begin{restatable}{theorem}{lengthTheorem}\cite{DeEy}\label{theorem: length}
For some $k$ such that $1\leq k \leq \ell-1$, the map $\pre_k$ is injective on $\mathcal{P}_\ell(n)$ if and only if the map $\pre_{\ell-k}$ is injective on $\mathcal{P}_\ell(n)$.
\end{restatable}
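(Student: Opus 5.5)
The plan is to exhibit an explicit bijection between $\pre_k(\lambda)$ and $\pre_{\ell-k}(\lambda)$ that is determined by the product $P(\lambda):=\lambda_1\lambda_2\cdots\lambda_\ell$, and then to show that this product can be read off from either partition. The key identity is the complementation map on $k$-subsets of $\{1,\dots,\ell\}$. For each $k$-subset $S$, the complement $S^c$ is an $(\ell-k)$-subset, and
\[\prod_{i\in S}\lambda_i\cdot\prod_{i\in S^c}\lambda_i=P(\lambda).\]
Hence, as multisets,
\[\pre_{\ell-k}(\lambda)=\{\{P(\lambda)/x : x\in\pre_k(\lambda)\}\}.\]
So $\pre_{\ell-k}(\lambda)$ is obtained from $\pre_k(\lambda)$ by dividing $P(\lambda)$ by each part, and by symmetry the same holds with $k$ and $\ell-k$ exchanged.

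Next I will check that $P(\lambda)$ is determined by $\pre_k(\lambda)$ alone, for $\lambda\in\mathcal{P}_\ell(n)$ with $\ell$ fixed. The product of all parts of $\pre_k(\lambda)$ equals $\prod_{|S|=k}\prod_{i\in S}\lambda_i$. Each index $i$ lies in exactly $\binom{\ell-1}{k-1}$ of the $k$-subsets, so this product is $P(\lambda)^{\binom{\ell-1}{k-1}}$. Because $P(\lambda)$ is a positive integer and the exponent $\binom{\ell-1}{k-1}\ge 1$ is fixed once $\ell$ and $k$ are fixed, $P(\lambda)$ is the unique positive root of the product of the parts of $\pre_k(\lambda)$.

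With these two facts the proof is short. Suppose $\pre_k$ is injective on $\mathcal{P}_\ell(n)$, and let $\lambda,\mu\in\mathcal{P}_\ell(n)$ satisfy $\pre_{\ell-k}(\lambda)=\pre_{\ell-k}(\mu)$.
\begin{enumerate}
\item Applying the second fact with $\ell-k$ in place of $k$ (which is valid since $1\le \ell-k\le \ell-1$) gives $P(\lambda)=P(\mu)$.
\item The complementation identity then gives $\pre_k(\lambda)=\{\{P(\lambda)/y: y\in\pre_{\ell-k}(\lambda)\}\}=\pre_k(\mu)$.
\item Injectivity of $\pre_k$ yields $\lambda=\mu$.
\end{enumerate}
The converse follows by the same argument with $k$ replaced by $\ell-k$.

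There is no real obstacle here. The one point needing care is that recovering $P$ uses the fixed length $\ell$, which holds because the domain is $\mathcal{P}_\ell(n)$. It also matters that the complementation map is a bijection of subsets, so that multiplicities are preserved and the multiset identity holds, not merely an identity of sets.
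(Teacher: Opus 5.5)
Your proposal is correct and follows essentially the same approach as the paper: recover $\lambda_1\cdots\lambda_\ell$ from the product of all parts of $\pre_k(\lambda)$, which equals $(\lambda_1\cdots\lambda_\ell)^{\binom{\ell-1}{k-1}}$, and then obtain $\pre_{\ell-k}(\lambda)$ from $\pre_k(\lambda)$ by dividing this product by each part. You are slightly more explicit than the paper about which direction of the implication is being proved and about complementation preserving multiplicities.
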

\begin{proof}
Suppose there exist two partitions $\lambda,\mu \in \mathcal{P}_\ell(n)$ such that $\pre_k(\lambda)=\pre_k(\mu)$. Note the product of all elements in $\pre_k(\lambda)$ is $(\lambda_1\lambda_2\dots\lambda_\ell)^{\binom{\ell-1}{k-1}}$. Likewise, the product of all elements in $\pre_k(\mu)$ is $(\mu_1\mu_2\dots\mu_\ell)^{\binom{\ell-1}{k-1}}$.
Clearly, both of these are equal, so $\lambda_1\lambda_2\dots\lambda_\ell=\mu_1\mu_2\dots\mu_
\ell$. 

Note $\pre_{\ell-k}(\lambda)$ can be generated by taking the reciprocal of every element in $\pre_k(\lambda)$ and then multiplying every element by $\lambda_1\lambda_2\dots\lambda_\ell$. Doing a similar operation for generating $\pre_{\ell-k}(\mu)$, by the assumption $\pre_k(\lambda)=\pre_k(\mu)$, we see that $\pre_{\ell-k}(\lambda)=\pre_{\ell-k}(\mu)$. Thus, $\pre_k$ being injective is equivalent to $\pre_{\ell-k}$ being injective, as desired.
\end{proof}
\begin{cor}\label{cor: lengthcase}
For positive integers $k$, the function $\pre_k$ is injective on partitions of length $k+1$ and length $k+2$.
\end{cor}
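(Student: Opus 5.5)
The plan is to reduce both lengths, via Theorem~\ref{theorem: length}, to the injectivity of $\pre_1$ and $\pre_2$, and then invoke known results.

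For length $\ell=k+1$: Theorem~\ref{theorem: length} with $\ell=k+1$ says $\pre_k$ is injective on $\mathcal{P}_{k+1}(n)$ if and only if $\pre_{1}$ is. Since $e_1(\lambda)$ has summands exactly $\lambda_1,\dots,\lambda_\ell$, we have $\pre_1(\lambda)=\lambda$. Hence $\pre_1$ is the identity and trivially injective. One degenerate case needs a check: Theorem~\ref{theorem: length} requires $1\le k\le \ell-1$, which holds since $\ell=k+1\ge 2$.

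For length $\ell=k+2$: apply Theorem~\ref{theorem: length} with $\ell=k+2$, which requires $1\le k\le k+1$. This says $\pre_k$ is injective on $\mathcal{P}_{k+2}(n)$ if and only if $\pre_2$ is injective on $\mathcal{P}_{k+2}(n)$. Injectivity of $\pre_2$ on all partitions of $n$ with at least $2$ parts is Li's theorem~\cite{Li}, quoted in the introduction, and it restricts to $\mathcal{P}_{k+2}(n)$ since $k+2\ge 2$. This gives the claim.

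If we wanted to avoid citing Li, the hardest part would be a self-contained proof that $\pre_2$ is injective on partitions of fixed length. I would follow Lemma~\ref{lem: liextension} with $k=2$: it suffices to show $\lambda_1=\mu_1$. For that I would use $\sum_{i<j}\lambda_i\lambda_j=\tfrac12(n^2-\sum\lambda_i^2)$ to recover the power sum $p_2$, together with the fact that the largest part of $\pre_2$ equals $\lambda_1\lambda_2$. Pinning down $\lambda_1$ from these is the genuine obstacle. Since the paper has already cited Li's result, I would simply invoke it.
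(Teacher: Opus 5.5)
Your proposal is correct and is exactly the argument the paper intends: the corollary is given without a separate proof as an immediate consequence of Theorem~\ref{theorem: length}, which reduces length $k+1$ to $\pre_1$ (the identity) and length $k+2$ to $\pre_2$, injective on $\mathcal{P}(n)$ by Li's theorem. Your check that $1\le k\le \ell-1$ holds is appropriate, and you are right to work on $\mathcal{P}_{k+1}(n)$ and $\mathcal{P}_{k+2}(n)$ with $n$ fixed, since without fixing $n$ the length-$(k+2)$ claim fails (e.g.\ $\pre_2(4,4,4,1)=\pre_2(8,2,2,2)$); the closing paragraph about avoiding Li is unnecessary.
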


We now highlight the algorithmic approaches to injectivity. In their paper, Ballantine et al.~\cite{Bplus} gave, in Remark 2.7, an algorithm to determine $\lambda$ from $\nu=\pre_2(\lambda)$ for the class $\mathcal{S}_k$. We can also extend this to $k=3$ through the cubic formula:
\begin{exam}\label{exam: remark2.7to3}
Assume $m_\lambda(1)\geq k=3$. Recall the following inequality:
\[m_\nu(1)=\binom{m_\lambda(1)}{3}.\]
Letting $c=m_\nu(1)$, it follows:
\[m_\lambda(1)^3-3m_\lambda(1)^2+2m_\lambda(1)-6c=0.\]
Using the cubic formula, we obtain the following expression (which yields an integer if and only if $c = \binom{a}{3}$ for some integer $a$):
\[m_\lambda(1)=1+\left(3c+\sqrt{9c^{2}-\frac{1}{27}}\right)^{\frac{1}{3}}+\left(3c-\sqrt{9c^{2}-\frac{1}{27}}\right)^{\frac{1}{3}}.\]
Now, using an analogous method of Lemma 2.4 in \cite{Bplus}, we can develop a recursive relation for $m_\lambda(y)$, allowing us to determine $\lambda$ uniquely from $\nu$.

For the case of $m_\lambda(1)=k-1=2$ and $m_\lambda(p)\geq 1$ for some prime $p$, we can do an analogous procedure to above to determine $\lambda$ from $\nu$.
\end{exam}

Utilizing an algorithmic approach, we can also show injectivity on the class of the $m$-ary partitions, $\mathcal{B}_m(n)$, for $m\geq k$. Therefore, we will now prove Theorem~\ref{theorem: mary}.
\begin{convention}\label{convention: Sk}
Note, by the injectivity of $\pre_k$ on $\mathcal{S}_k$ \cite{Bplus}, if $m_\lambda(1)\geq k$, then $\lambda$ is unique to $\pre_k(\lambda)$. Thus, henceforth, assume $0\leq m_\lambda(1)<k$.
\end{convention}
We first consider the following lemma:
\begin{lem}\label{lem: k-ary1}
Assume $0\leq m_\lambda(1)<k$ and $0\leq m_\mu(1)<k$. For partitions $\lambda,\mu$ in $\mathcal{B}_m(n)$, we have $m_\lambda(1)=m_\mu(1)$.
\end{lem}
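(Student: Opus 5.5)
The plan is to avoid the structure of $\pre_k$ entirely and use only the arithmetic of $m$-ary partitions together with the fact that $\lambda$ and $\mu$ partition the same integer $n$. I expect the hypothesis $m \ge k$ to do all the work: it forces the number of $1$'s to be a least residue modulo $m$.

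First, fix $\lambda \in \mathcal{B}_m(n)$. Every part of $\lambda$ is a power $m^j$ with $j \ge 0$. The parts with $j \ge 1$ are divisible by $m$, and the parts equal to $m^0 = 1$ contribute exactly $m_\lambda(1)$ to the total. Therefore
\[
n = m_\lambda(1) + \sum_{\lambda_i \ne 1} \lambda_i \equiv m_\lambda(1) \pmod m .
\]

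Second, by the hypothesis $0 \le m_\lambda(1) < k \le m$, the integer $m_\lambda(1)$ lies in $\{0,1,\dots,m-1\}$. So $m_\lambda(1)$ is precisely the least nonnegative residue of $n$ modulo $m$, and it depends only on $n$. The same argument applied to $\mu \in \mathcal{B}_m(n)$ shows that $m_\mu(1)$ is also the least nonnegative residue of $n$ modulo $m$. Hence $m_\lambda(1) = m_\mu(1)$.

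The only point needing care is the degenerate case $m = 1$. There every part is $1$, and $m \ge k \ge 1$ forces $k = 1$. The hypothesis $m_\lambda(1) < 1$ then gives $m_\lambda(1) = 0$, so $\lambda$ is the empty partition, $n = 0$, and likewise $m_\mu(1) = 0$. The statement holds trivially here, although the congruence argument above is vacuous in this case.

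Beyond this, I expect no real obstacle. The implicit hypothesis $\pre_k(\lambda) = \pre_k(\mu)$ (or the equal-length convention of Section~\ref{subsec: notations}) is not needed for this lemma; it will matter only in the subsequent recursive step of the proof of Theorem~\ref{theorem: mary}.
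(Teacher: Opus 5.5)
Your proof is correct and is essentially the paper's argument: reduce $n$ modulo $m$ to get $m_\lambda(1)\equiv m_\mu(1)\pmod m$, then use $0\le m_\lambda(1),m_\mu(1)<k\le m$ to force equality. Your separate treatment of $m=1$ is harmless but unnecessary: the residue argument still applies there, because the only residue in $\{0,\dots,m-1\}$ is $0$.
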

\begin{proof}
Suppose $\lambda=(m^{a_1},m^{a_2},\dots,m^{a_\ell})$ and $\mu=(m^{b_1},m^{b_2},\dots, m^{b_\ell})$. Note the following equality:
\[n=m^{a_1}+m^{a_2}+\dots+m^{a_\ell}=m^{b_1}+m^{b_2}+\dots+m^{b_\ell}.\]
Taking the equality modulo $m$, we see that
\[n\pmod{m}\equiv m_\lambda(1)\pmod{m}\equiv m_\mu(1)\pmod{m}.\]

Thus, $m_\lambda(1)=m_\mu(1)+Cm$ for some integer constant $C$. However, as we assumed $0\leq m_\lambda(1)\leq k-1$ and as $m\geq k$, it follows that $C=0$. Thus, $m_\lambda(1)=m_\mu(1)$, as desired.
\end{proof}

Consider the function $\varphi: \mathcal{P}(n)\to \mathbb{Z}$ such that $\varphi(\alpha)=\prod_{\text{distinct }a\in\alpha} \binom{m_\lambda(m^a)}{m_\alpha(a)}$. It follows:
\begin{equation}\label{eq:1}
m_\nu(m^q)=\sum_{\substack{\alpha\in \mathcal{P}(q)\\ k\geq \ell(\alpha)\geq k-m_\lambda(1)}} \varphi(\alpha)\cdot \binom{m_\lambda(1)}{k-\ell(\alpha)}.
\end{equation}

We proceed with an inductive type argument on $m_\lambda(m^q)$. By Lemma~\ref{lem: k-ary1}, we know that $m_\lambda(1)$ is uniquely determined by $\nu$ and $n$. 

Now, suppose we know what $m_\lambda(1),m_\lambda(m),\dots,m_\lambda(m^{q-1})$ are for an integer $q\geq 1$. Let $m_\lambda(m^i)=a_i$ for $0\leq i\leq q-1$. Consider the following summation:
\[S_{q-1}=m_\lambda(1)+m_\lambda(m)+\dots+m_\lambda(m^{q-1}).\]

Suppose $S_{q-1}<k$. Consider $m_\nu(m^{\sum_{i=0}^{q} i\cdot a_i})$ where $a_q=k-(a_0+a_1+\dots+a_{q-1})$. Note, a potential nonzero element in its summation, as per Equation~\ref{eq:1}, would be as follows:
\[\binom{m_\lambda(m^q)}{a_q}\cdot \prod_{i=0}^{q-1}\binom{m_\lambda(m^i)}{a_i}=\binom{m_\lambda(m^q)}{a_q}.\]
We claim this is the only one:

\begin{lem}\label{lem: equality}
The following equality holds:
\[m_\nu(m^{\sum_{i=0}^{q} i\cdot a_i})=\binom{m_\lambda(m^q)}{a_q}\cdot \prod_{i=0}^{q-1}\binom{m_\lambda(m^i)}{a_i}=\binom{m_\lambda(m^q)}{a_q}.\]
\end{lem}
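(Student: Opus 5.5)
The plan is to prove the equality directly, by identifying exactly which $k$-element subsets of the parts of $\lambda$ have product $m^{E}$, where $E:=\sum_{i=0}^{q} i\cdot a_i$. I will not manipulate Equation~\eqref{eq:1} term by term. Every part of $\lambda\in\mathcal{B}_m(n)$ is a power $m^{e}$, so the product of a $k$-subset of parts is $m^{e_1+\cdots+e_k}$, where $e_1\le\cdots\le e_k$ are the exponents of the chosen parts. Hence $m_\nu(m^E)$ counts the $k$-subsets of the $\ell$ parts of $\lambda$ whose exponents sum to $E$.

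Group such subsets by their exponent profile $(c_0,c_1,c_2,\dots)$, where $c_i$ is the number of chosen parts equal to $m^i$, so that $\sum_i c_i=k$. The number of subsets with a given profile is $\prod_i\binom{m_\lambda(m^i)}{c_i}$. It therefore suffices to show one thing. The only profile with $\sum_i i\,c_i=E$ that can contribute a nonzero count is $c_i=a_i$ for $0\le i\le q-1$, $c_q=a_q$, and $c_i=0$ for $i>q$. That profile contributes $\binom{m_\lambda(m^q)}{a_q}\prod_{i<q}\binom{a_i}{a_i}=\binom{m_\lambda(m^q)}{a_q}$, which is the claimed formula.

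The key step is an extremal (domination) argument. Let $f_1\le\cdots\le f_k$ be the exponents of the reference profile: each exponent $i<q$ repeated $a_i$ times, then $q$ repeated $a_q$ times. This uses $a_q=k-S_{q-1}\ge 1$, which holds because $S_{q-1}<k$. Since $\lambda$ has exactly $a_i$ parts equal to $m^i$ for $i<q$, its $j$-th smallest exponent is $f_j$ for all $j\le S_{q-1}$, and is at least $q$ for $j>S_{q-1}$. So for any $k$-subset with sorted exponents $e_1\le\cdots\le e_k$, we get $e_j\ge f_j$ for every $j$. The $j$-th smallest element of a subset is at least the $j$-th smallest element of the whole multiset. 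Summing gives $\sum_j e_j\ge\sum_j f_j=E$, with equality only if $e_j=f_j$ for all $j$. Equality means the subset's profile is exactly the reference profile.

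One edge case remains, and it is the only place needing care. If $m_\lambda(m^q)<a_q$, then no subset realizes the reference profile. In that case $m_\nu(m^E)=0$, which agrees with $\binom{m_\lambda(m^q)}{a_q}=0$ under the paper's convention. So the equality holds in all cases.

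I expect the main obstacle to be presentational rather than mathematical. Equation~\eqref{eq:1} indexes its terms by partitions $\alpha$, and its relation to exponent profiles is somewhat opaque. I will therefore phrase the argument in terms of profiles, and note that each nonzero summand of \eqref{eq:1} corresponds to one profile. Thus the uniqueness of the profile is exactly the statement that only one summand survives.
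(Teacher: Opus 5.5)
Your proof is correct and uses the same extremal fact as the paper: the greedy choice ($a_i$ parts equal to $m^i$ for each $i<q$, then $a_q$ parts equal to $m^q$) is the only way for a $k$-subset to reach exponent sum $E$; the paper proves this by taking a competing nonzero profile $(b_i)$ with $b_i\le a_i$ for $i<q$ and deriving $\sum_{i=0}^{q-1}(q-i)(a_i-b_i)\le 0$, while you prove it by termwise domination of sorted exponents. Your version has the small advantage of settling the equality case directly, whereas the paper asserts without justification that some $b_i<a_i$ (this is true, since $b_i=a_i$ for all $i<q$ together with the exponent-sum constraint forces $b_q=a_q$ and $b_i=0$ for $i>q$).
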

\begin{proof}
Suppose for the sake of contradiction that in the summation for $m_\nu(m^{\sum_{i=0}^{q} i\cdot a_i})$, there exists another nonzero summand besides the one listed above, as per Equation~\ref{eq:1}. Suppose this other summand is of the following form:
\[\prod_{i=q}^\infty\binom{m_\lambda(m^i)}{b_i}\cdot \prod_{i=0}^{q-1}\binom{m_\lambda(m^i)}{b_i}\]
for some infinite sequence $(b_i)_{i\geq 0}$. Note $k=b_0+b_1+\dots$ and $b_i\leq a_i$ for all $0\leq i\leq q-1$, with at least one inequality being strict. Since we are evaluating $m_\nu(m^{\sum_{i=0}^{q} i\cdot a_i})$, we can make the following equality:
\begin{align*}
\sum_{i=0}^{q} i\cdot a_i&=\sum_{i=0}^{\infty} i\cdot b_i,\\
\sum_{i=q}^\infty i\cdot b_i&=q\cdot a_q+\sum_{i=0}^{q-1} i\cdot (a_i-b_i)\\
&=q\cdot (k-a_0-a_1-a_2-\dots-a_{q-1})+\sum_{i=0}^{q-1} i\cdot (a_i-b_i).\\
\end{align*}
However, note that the left-hand side of the above equation satisfies the following inequality:
\[\sum_{i=q}^\infty i\cdot b_i\geq q\cdot\sum_{i=q}^\infty  b_i=q(k-b_0-b_1-b_2-\dots-b_{q-1}).\]
Thus, it follows:
\begin{align*}
&q\cdot (k-a_0-a_1-a_2-\dots-a_{q-1})+\sum_{i=0}^{q-1} i\cdot (a_i-b_i)\geq q(k-b_0-b_1-b_2-\dots-b_{q-1}),\\
&\sum_{i=0}^{q-1} i\cdot (a_i-b_i)\geq \sum_{i=0}^{q-1} q\cdot (a_i-b_i),\\
&\sum_{i=0}^{q-1} (q-i)\cdot (a_i-b_i)\leq 0.\\
\end{align*}
Note that $a_i-b_i\geq 0$ for all $i$, with at least one being positive. Also, all of $q-i$ are positive. Therefore, the left-hand sum must be strictly positive, yielding a contradiction, as desired.
\end{proof}

Thus, if $m_\nu(m^{\sum_{i=0}^{q} i\cdot a_i})>0$, then, due to the strictly increasing nature of the binomial function, we can determine $m_\lambda(m^q)$. On the other hand, if $m_\nu(m^{\sum_{i=0}^{q} i\cdot a_i})=0$, then $m_\lambda(m^q)<a_q\leq k\leq m$. However, note the following equality:
\begin{align*}
n&= \sum_{i=0}^{\infty}m_\lambda(m^i)\cdot m^i\\
&\equiv m_\lambda(1)+m_\lambda(m)\cdot m+\dots+ m_\lambda(m^q)\cdot m^q\pmod{m^{q+1}}.
\end{align*}
Therefore, it follows:
\[m_\lambda(m^q)\equiv \frac{n-m_\lambda(1)-m_\lambda(m)\cdot m-\dots- m_\lambda(m^{q-1})\cdot m^{q-1}}{m^q}\pmod{m}.\]
As $m_\lambda(m^q)<m$, however, this means the congruence forces equality, allowing us to determine $m_\lambda(m^q)$.\\

Using this result, we can extract $m_\lambda(1), m_\lambda(m), \cdots$ until the summation $S_{q}$ is at least $k$. Suppose this occurs at $m_\lambda(m^p)$:
\begin{align*}
&S_{p-1}=m_\lambda(1)+m_\lambda(m)+\dots+m_\lambda(m^{p-1})<k,\\
&S_{p}=m_\lambda(1)+m_\lambda(m)+\dots+m_\lambda(m^{p-1})+m_\lambda(m^p)\geq k.
\end{align*}
Thus, we know the values of $m_\lambda(1),m_\lambda(m),\cdots, m_\lambda(m^p)$. Now, consider $m_\nu(m^{1+\sum_{i=0}^{p} i\cdot a_i})$. Using a similar method as in the proof of Lemma~\ref{lem: equality}, we can see that $m_\nu(m^{1+\sum_{i=0}^{p} i\cdot a_i})$ can be expressed in terms of solely $m_\lambda(1),m_\lambda(m),\cdots, m_\lambda(m^p),$ and $m_\lambda(m^{p+1})$. Furthermore, the only summands in $m_\nu(m^{1+\sum_{i=0}^{p} i\cdot a_i})$ that contain $m_\lambda(m^{p+1})$ would be of the form $\binom{m_\lambda(m^{p+1})}{j}\cdot X$ for some positive integer $j\leq m_\lambda(m^{p+1})$ and some positive integer $X$. Thus, due to the increasing nature of the binomial function, it follows that $m_\nu(m^{1+\sum_{i=0}^{p} i\cdot a_i})$ would be an increasing function of $m_\lambda(m^{p+1})$. Thus, if $m_\nu(m^{1+\sum_{i=0}^{p} i\cdot a_i})$ is nonzero, we can determine $m_\lambda(m^{p+1})$. 

Now, suppose $m_\nu(m^{1+\sum_{i=0}^{p} i\cdot a_i})=0$. Note that a nonzero element of the summation, as per Equation~\ref{eq:1}, is as follows:
\[m_\nu(m^{1+\sum_{i=0}^{p} i\cdot a_i})\geq m_\lambda(m^{p+1})\cdot \binom{m_\lambda(m^p)}{a_p-1}\cdot \prod_{i=0}^{p-1}\binom{m_\lambda(m^i)}{a_i}\geq m_\lambda(m^{p+1}).\]
Thus, it follows $m_\lambda(m^{p+1})=0$.\\

We can repeat this process inductively. Namely, we can write $m_\nu(m^{g+\sum_{i=0}^{p} i\cdot a_i})$, which, by Equation~\ref{eq:1} can be expressed in terms of solely $m_\lambda(1),m_\lambda(m),\cdots, m_\lambda(m^{p+g-1}),$ and $m_\lambda(m^{p+g})$. Thus, if $m_\nu(m^{g+\sum_{i=0}^{p} i\cdot a_i})\neq 0$, we can determine $m_\lambda(m^{p+g})$. If it were $0$, then, we can note the following inequality:
\[m_\nu(m^{g+\sum_{i=0}^{p} i\cdot a_i})\geq m_\lambda(m^{p+g})\cdot \binom{m_\lambda(m^p)}{a_p-1}\cdot \prod_{i=0}^{p-1}\binom{m_\lambda(m^i)}{a_i}\geq m_\lambda(m^{p+g}).\]
Thus, we can determine $m_\lambda(m^{p+g})$ to be $0$. Inducting this over $g\in \ZZ^+$, it follows that we can determine all of $\lambda$ from $\nu$, thus proving injectivity on the $m$-ary partitions:

\maryTheorem*
\begin{exam}\label{exam: kary}
To illustrate how we can determine $\lambda$ from $\nu$ for an $m$-ary partition, consider $n=28$, $m=k=3$, and $\nu=(243^3, 81^7, 27^6, 9^3)$. As $m_\nu(1)=0$, we can determine $m_\lambda(1)$ to be the residue of $n$ modulo $3$, which is $1$. Now, we can say $3=m_\nu(9)=\binom{m_\lambda(3)}{2}$. Thus $m_\lambda(3)=3$. Then, we can say $7=m_\nu(27)=m_\lambda(9)\cdot m_\lambda(3)+\binom{m_\lambda(3)}{3}$. Thus, $m_\lambda(9)=2$. We can stop here as the components of our partition sum to $28$, making the unique $\lambda$ be $(9,9,3,3,3,1)$.
\end{exam}
\begin{remark}\label{remark: marybad}
Extending this to the $m$-ary partitions in general, say if $m<k$, has shown to be difficult, if not impossible. In particular, there is not a neat way to extract a similar result as in Lemma~\ref{lem: k-ary1} to here. In the proof of Theorem~\ref{theorem: mary}, the condition $m \geq k$ is used critically at multiple stages, hinting that it is a necessary requirement.

Furthermore, unlike in the above scenario, if $m<k$, the $m$-ary partitions seem to not be injective on $k$ parts on $\pre_k$. For instance, consider $k = 5$ and the binary partitions $\lambda = (8, 8, 8, 1, 1)$ and $\mu = (16, 4, 2, 2, 2)$ of $26$. Note $\pre_5(\lambda)=512=\pre_5(\mu)$, so injectivity fails.

\end{remark}

This highlights the limitations of a purely algorithmic approach in proving general injectivity. Similar to Remark~\ref{remark: latticefail} on why a purely lattice-based approach to prove injectivity on $\pre_2$ is likely bound to fail, arguments trying to individually extract information about $\pre_k$ element by element are likely bound to fail for large cases. That is why Li's~\cite{Li} argument, which relied on the fundamental structure of $e_2$, allowed us to prove general injectivity for $\pre_2$. That being said, algorithmic approaches can show an approximation of the extent to which injectivity holds, such as on specific families of partitions.

We end this section on $\pre_k$ by answering Question~\ref{quest: P3}, as well as a generalization of it. Note that Garg~\cite{Garg} also proved the following result, but we provide it anyway for completeness and also because it was independently derived. 
\begin{theorem}\label{theorem: p3}
The only positive integers $n$ for which $\pre_2(n)=1$ are $n=1,2,$ and $4$.
\end{theorem}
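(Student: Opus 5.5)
The plan is to give explicit preimages. For every $n\notin\{1,2,4\}$ I will exhibit two distinct partitions of $n$ in $Pre_2(n)$. For $n\in\{1,2,4\}$ I will check directly that $Pre_2(n)$ has exactly one element.

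\textbf{Step 1: one element always exists.} For every $n\ge 1$ we have $\pre_2(n,1)=(n)$ (for $n=1$, take $\lambda=(1,1)$). So $(n)\in Pre_2(n)$ and $\pre_2(n)\ge 1$. It therefore suffices to find, for each $n\notin\{1,2,4\}$, a $\lambda$ with $\pre_2(\lambda)$ a partition of $n$ of length greater than $1$. Since $\ell(\pre_2(\lambda))=\binom{\ell(\lambda)}{2}$, it is enough to use $\lambda$ of length at least $3$.

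\textbf{Step 2: a small family of templates.} I will use the following templates, each valid for the stated range of the integer $x$:
\begin{align*}
\pre_2(x,1,1)&=(x,x,1), \quad |\cdot|=2x+1,\ x\ge 1,\\
\pre_2(x,2,1)&=(2x,x,2), \quad |\cdot|=3x+2,\ x\ge 2,\\
\pre_2(x,1,1,1)&=x^3 1^3, \quad |\cdot|=3x+3,\ x\ge 1,\\
\pre_2(x,2,2)&=(2x,2x,4), \quad |\cdot|=4x+4,\ x\ge 2,\\
\pre_2(x,1,1,1,1)&=x^4 1^6, \quad |\cdot|=4x+6,\ x\ge 1.
\end{align*}
These cover the integers as follows:
\begin{itemize}
\item every odd $n\ge 3$ (first template);
\item every $n\equiv 2\pmod 3$ with $n\ge 8$ (second);
\item every $n\equiv 0\pmod 3$ with $n\ge 6$ (third);
\item every $n\equiv 0\pmod 4$ with $n\ge 12$ (fourth);
\item every $n\equiv 2\pmod 4$ with $n\ge 10$ (fifth).
\end{itemize}
The one delicate residue class is even $n\equiv 1\pmod 3$, i.e. $n\equiv 4\pmod 6$. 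Splitting it modulo $4$ sends each such $n\ge 10$ to the fourth or fifth template; for example $n=10$ uses $\lambda=1^5$ and $n=16$ uses $\lambda=(3,2,2)$. Checking that every $n\ge 3$ with $n\neq 4$ lands in at least one range is a short case check, and I expect this bookkeeping to be the only mildly fiddly point. The small values are $n=3,5,6,7,8$: odd $n$ use the first template, $6$ the third, and $8$ the second.

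\textbf{Step 3: uniqueness for $n=1,2,4$.} Any $\nu\in Pre_2(n)$ has length $\binom{\ell}{2}\in\{1,3,6,\dots\}$.
\begin{itemize}
\item For $n=1,2$, the only partition of $n$ with such a length is $(n)$.
\item For $n=4$, besides $(4)$ the only candidate is $(2,1,1)$, of length $3$. It would require $\lambda=(a,b,c)$ with two of the three products $ab,ac,bc$ equal to $1$. Two products equal to $1$ force all three entries to be $1$, so $\lambda=(1,1,1)$ and $\nu=(1,1,1)$, which is not a partition of $4$. Hence $(2,1,1)\notin Pre_2(4)$.
\end{itemize}
So $\pre_2(n)=1$ exactly for $n=1,2,4$.
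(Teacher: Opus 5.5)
Your proof is correct. It differs from the paper's mainly in how the even $n$ are handled.

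The paper applies only $(x,1,1)$ and $(x,1,1,1,1)$ directly. It then uses a recursive doubling step: if $\lambda$ witnesses $\pre_2(n)\ge 2$, then $2\lambda$ witnesses $\pre_2(4n)\ge 2$. This reaches every $2^aX$ with $X\ge 3$ odd, and powers of $2$ are treated separately with $(2^{k-2}-1,2,2)$ and $(2,2,1)$.

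You instead notice that the paper's power-of-$2$ construction is just the special case $x=2^{k-2}-1$ of your template $(x,2,2)$. That template already covers every multiple of $4$ from $12$ on, so the recursion and the separate power-of-$2$ case disappear entirely. In fact $(x,1,1)$, $(x,2,2)$ and $(x,1,1,1,1)$ suffice once you add $\lambda=(1,1,1,1)$ for $n=6$ and $\lambda=(2,2,1)$ for $n=8$. Your mod-$3$ templates and the mod-$6$ bookkeeping are therefore redundant, though harmless.

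You also actually prove $\pre_2(4)=1$ via the triangular-length argument ruling out $(2,1,1)$; the paper only says this is easily checked.

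What the paper's route buys is the structural scaling observation: multiplying $\lambda$ by $c$ multiplies $|\pre_k(\lambda)|$ by $c^k$, and that tool transfers to other $k$. What yours buys is a shorter, fully explicit, non-inductive second preimage for every $n\notin\{1,2,4\}$.
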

\begin{proof}
One can easily check that $n=1,2,4$ satisfy $\pre_2(n)=1$, as these cases are relatively small. Thus, we now show that no other $n$ satisfy $\pre_2(n)=1$.

Henceforth, denote $\nu$ as a partition of $n$ and also as the output of $\pre_2(\lambda)$ for a partition $\lambda$. Recall that for any $n\geq 1$, a valid integer partition $\lambda$ such that $\nu$ is a partition of $n$ is $\lambda=(n,1)$, so $\pre_2(n)\geq 1$ for all such $n$. Thus, it suffices to show there exists some partition $\lambda\neq (n,1)$ such that $\pre_2(\lambda)=\nu$ is a partition of $n$. 

Take $\lambda=(x,1,1)$ for any positive integer $x$. Thus, $\nu=\pre_2(\lambda)=(x,x,1)$, so $\nu$ is a partition of $2x+1$. Thus, varying $x$ over $\ZZ^+$, it follows that any odd integer $n$ not equal to $1$ has the property that $\pre_2(n)\geq 2$. Now, take $\lambda=(x,1,1,1,1)$ for any positive integer $x$. Thus, $\nu=\pre_2(\lambda)=(x,x,x,x,1,1,1,1,1,1)$, so $\nu$ is a partition of $4x+6$. Thus, varying $x$ over non-negative integers (if $x=0$, let the partition $\lambda=(1,1,1,1,1,1)$) it follows that every $n$ congruent to $2\pmod{4}$ that is not equal to $2$ has the property that $\pre_2(n)\geq 2$.

Now suppose a given $n$ has the property that $\pre_2(n)\geq 2$ and let $\lambda=(\lambda_1,\lambda_2,\dots,\lambda_\ell)$ be the corresponding partition with $\pre_2(\lambda)=\nu$. If we consider the new partition $\lambda'=(2\lambda_1,2\lambda_2,\dots,2\lambda_\ell)$, it follows that $\pre_2(\lambda')=\nu'$ is a partition of $4n$, in which $\lambda'\neq (4n,1)$. Thus, if $n$ satisfies $\pre_2(n)\geq 2$, then $4n$ satisfies $\pre_2(4n)\geq 2$. 

Applying this statement inductively, it follows that $\pre_2(2^a\cdot X)\geq 2$ for all odd integers $X\geq 3$ and integers $a\geq 0$.

We now show that if $n=2^k$ for any $k\geq 3$, then $\pre_2(n)\geq 2$. Consider $\lambda=(2^{k-2}-1,2,2)$ for $k\geq 4$. Thus, $\nu=\pre_2(\lambda)=(2^{k-1}-2,2^{k-1}-2,4)$, so $\nu$ is a partition of $2^k$, in which $\lambda\neq 2^k$. If $k=3$, then $\lambda=(2,2,1)$ results in $\nu$ being a partition of $2^3=8$. Thus, we have shown all $n$ besides $1,2,4$ fail to satisfy $\pre_2(n)=1$.
\end{proof}

\section{Complete homogeneous polynomials}\label{sec: prh}
We first prove general injectivity for $\prh_k$. Note that Hadelyn et al.~\cite{Ha+} proved general injectivity as well, but we offer our own proof, independently derived, in order to highlight the relevance of this methodology.
\begin{theorem}
The function $\prh_k$ is injective on $\mathcal{P}(n)$.
\end{theorem}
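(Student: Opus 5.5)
Throughout, suppose $\lambda,\mu\in\Pn$ satisfy $\prh_k(\lambda)=\prh_k(\mu)=\nu$. By the convention of Section~\ref{subsec: notations} we may assume $\ell(\lambda)=\ell(\mu)=\ell$. The plan is an extremal, peel-off argument in the spirit of Lemma~\ref{lem: liextension}. We determine $\lambda_1,\lambda_2,\dots$ one at a time from the largest parts of $\nu$, and after each step remove the monomials already accounted for.

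\emph{Base step.} The largest part of $\nu$ is $\lambda_1^k$, since every monomial $\lambda_{i_1}\cdots\lambda_{i_k}$ with $i_1\le\dots\le i_k$ is at most $\lambda_1^k$. Hence $\lambda_1^k=\mu_1^k$, which gives $\lambda_1=\mu_1$.

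\emph{Inductive step.} Suppose $\lambda_i=\mu_i$ for $1\le i\le j-1$. Let $M_j(\lambda)$ be the multiset of monomials of $h_k$ evaluated at $\lambda$ that use only indices in $\{1,\dots,j-1\}$; this is exactly $\prh_k(\lambda_1,\dots,\lambda_{j-1})$. By the inductive hypothesis $M_j(\lambda)=M_j(\mu)$, so the multisets $\nu\setminus M_j(\lambda)$ and $\nu\setminus M_j(\mu)$ coincide, and in particular have the same maximum. Every remaining monomial involves some index $i_k\ge j$. Replacing the other indices by $1$ and $i_k$ by $j$ can only increase the product, so the maximum of $\nu\setminus M_j(\lambda)$ is $\lambda_1^{k-1}\lambda_j$. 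Comparing with $\mu$ gives $\lambda_1^{k-1}\lambda_j=\mu_1^{k-1}\mu_j=\lambda_1^{k-1}\mu_j$, and since $\lambda_1>0$ we get $\lambda_j=\mu_j$. Induction up to $j=\ell$ yields $\lambda=\mu$.

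Two points need care. The first is that "multiset difference" must be taken with multiplicity. This is fine because $M_j(\lambda)$ is literally a sub-multiset of $\nu$ indexed by multisets of indices in $\{1,\dots,j-1\}$, and the same index structure works for $\mu$. The second is that the maximum-element argument must handle ties, where several monomials equal $\lambda_1^{k-1}\lambda_j$. Only the value of the maximum is used, so ties are harmless. The argument is in fact simpler than Lemma~\ref{lem: liextension}, because repetition of index $1$ is allowed and we never need $k-1$ parts known in advance. I expect the main obstacle to be bookkeeping rather than mathematics, namely stating cleanly that the removed sub-multisets agree. For the remark about an explicit algorithm when $m_\lambda(1)>0$, I would additionally read off $\ell$ from the multiplicity of $1$ in $\nu$, which equals $\binom{m_\lambda(1)+k-1}{k}$. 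I would then reconstruct multiplicities recursively, but that is beyond what is needed for injectivity.
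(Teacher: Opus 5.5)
Your proof is correct and follows the paper's argument essentially verbatim. Like the paper, you get $\lambda_1=\mu_1$ from the maximum $\lambda_1^k$, remove the sub-multiset of monomials in the already-determined indices, and read off $\lambda_j$ from the maximum $\lambda_1^{k-1}\lambda_j$ of what remains. The one difference is that the paper proves $\ell(\lambda)=\ell(\mu)$ directly, noting that $\prh_k(\lambda)$ has $\binom{k+\ell(\lambda)-1}{k}$ parts, which is strictly increasing in $\ell(\lambda)$; you instead cite the convention.
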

\begin{proof}
We use a ``multiset" argument analogous to Lemma 2.1 in \cite{Li}.

In particular, take two partitions $\lambda,\mu$ such that $\prh_k(\lambda)=\prh_k(\mu)$. Note the following equality:
\[\ell(\prh_k(\lambda))=\binom{k+\ell(\lambda)-1}{k}=\binom{k+\ell(\mu)-1}{k}=\ell(\prh_k(\mu)).\]
Thus, as the binomial function is strictly increasing, it follows that $\ell(\lambda)=\ell(\mu)$.

Furthermore, note $\lambda_1^k=\mu_1^k$, as they are both maximum elements (there can exist multiple maximum elements) in $\prh_k(\lambda)$ and $\prh_k(\mu)$, respectively. Thus, $\lambda_1=\mu_1$.

We now proceed by induction on $\ell$, with our base case being $\lambda_1=\mu_1$. Suppose $\lambda_i=\mu_i$ for all $1\leq i\leq k'$. Consider the multiset $M_{k,k'}(\lambda):=\{\{\lambda_1^{p_1}\lambda_2^{p_2}\cdots\lambda_{k'}^{p_{k'}}: p_1+\dots+p_{k'}=k\}\}$. Thus, by the inductive hypothesis, $M_{k,k'}(\lambda)=M_{k,k'}(\mu)$. 

Thus, it follows:
\[\prh_k(\lambda)\setminus M_{k,k'}(\lambda)=\prh_k(\mu)\setminus M_{k,k'}(\mu).\]
Hence, maximum elements in both of these sets are equal. One can easily see a maximum element in both sets is $\lambda_1^{k-1}\lambda_{k'+1}$ and $\mu_1^{k-1}\mu_{k'+1}$, respectively. Therefore, $\lambda_{k'+1}=\mu_{k'+1}$, as desired.

Therefore, by the principle of strong induction, we conclude $\lambda_i=\mu_i$ for all $1\leq i\leq \ell$, so $\lambda=\mu$. Thus, $\prh_k$ is injective, as desired.
\end{proof}

\begin{remark}
One may note the relative ease of this proof compared to those of $\pre_k$. Particularly, as $h_k$ has monomials of one variable, it is much easier to extract information about $\lambda$ from $\prh_k(\lambda)$ compared to the problem of $\pre_k$.
\end{remark}

To highlight this, we demonstrate an algorithmic approach when $m_\lambda(1)>0$. 
\begin{lem}\label{lem: prhalg}
For any partition $\lambda$ such that $m_\lambda(1)>0$, we can extract $\lambda$ from $\nu=\prh_k(\lambda)$. 
\end{lem}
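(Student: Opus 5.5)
The plan is to recover the multiplicities of $\lambda$ one distinct part at a time, in increasing order, peeling off known monomials from $\nu$ as we go. Write $\ell = \ell(\lambda)$ and $a = m_\lambda(1) > 0$. The length $\ell$ is determined by $\nu$, since $\ell(\nu) = \binom{k+\ell-1}{k}$ is strictly increasing in $\ell$. The monomials of $h_k$ equal to $1$ are exactly those using only variables equal to $1$. Hence
\[
m_\nu(1) = \binom{k+a-1}{k},
\]
and because this is strictly increasing in $a$ and positive when $a \ge 1$, $\nu$ determines $a$.

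Next comes the inductive step. Suppose we have already determined the full multiset of parts of $\lambda$ that are $< y$; call it $\lambda_{<y}$. Let $M_{<y}$ be the multiset of all degree-$k$ monomials (with repetition, as in $h_k$) evaluated only on entries of $\lambda_{<y}$. Then $\nu \setminus M_{<y}$ consists of exactly those monomials that involve at least one part $\ge y$. Let $z$ be the smallest element of $\nu\setminus M_{<y}$; this is possible as long as $|\lambda_{<y}|$ is less than the total, i.e., $\ell$ parts have not yet been found.

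We claim $z$ equals the smallest part $y_0 \ge y$ of $\lambda$. Any monomial involving a part $\ge y_0$ is at least $y_0$, because all factors are $\ge 1$. The monomial $y_0\cdot 1^{k-1}$ exists since $a \ge 1$, and it attains exactly this bound. This is precisely where the hypothesis $m_\lambda(1)>0$ is used: without a $1$ to pad with, the minimum would be $y_0 \cdot (\min\lambda)^{k-1}$, which is entangled with unknowns.

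To get the multiplicity $b = m_\lambda(y_0)$, count the occurrences of the value $y_0$ in $\nu\setminus M_{<y}$. A monomial there equals $y_0$ only if it contains exactly one factor $\ge y_0$ that equals $y_0$, with all remaining factors equal to $1$. Any factor in $(1,y_0)$ or any second factor $\ge y_0$ would push the product above $y_0$, since $y_0 \ge 2$. The number of such monomials is
\[
b\cdot\binom{a+k-2}{k-1},
\]
choosing the $y_0$-variable and then a multiset of $k-1$ one-variables. Since $a \ge 1$, the binomial is positive and known, so $b$ is determined. We then append $y_0^{b}$ to $\lambda_{<y}$ and repeat with $y = y_0+1$, stopping once $\ell$ parts have been recovered (equivalently, once they sum to $n$).

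The main point requiring care is the counting step: making sure that no monomial other than $y_0\cdot 1^{k-1}$ takes the value $y_0$ in the remaining multiset. Parts strictly between $1$ and $y_0$ do exist among the already-known parts, and a product of such parts could equal $y_0$; for example, $2\cdot 3 = 6$. However, such monomials use only known parts, so they lie in $M_{<y}$ and have already been removed. This is why we subtract the full $M_{<y}$ rather than merely tracking minima, and with that the argument closes.
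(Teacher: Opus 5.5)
Your proof is correct and is essentially the paper's argument. Both recover $m_\lambda(1)$ from $m_\nu(1)=\binom{m_\lambda(1)+k-1}{k}$ and then proceed by strong induction on part size. Both rely on the fact that the only monomials of value $y$ involving a part $\ge y$ are $y\cdot 1^{k-1}$, occurring exactly $m_\lambda(y)\binom{m_\lambda(1)+k-2}{k-1}$ times, a positive multiple precisely because $m_\lambda(1)>0$. The difference is bookkeeping: the paper handles primes directly, removes the contributions of products of smaller parts for composite $y$ via an explicit sum over the factorization set $D_y$, and scans every integer $y$; you delete the whole multiset $M_{<y}$ and jump to the next part by taking a minimum, which avoids the explicit formula.
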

\begin{proof}
Using a ``stars and bars" argument, it follows:
\[m_\nu(1)=\binom{m_{\lambda}(1)+k-1}{k}.\]
Thus, as $m_\lambda(1)>0$, and due to the strictly increasing nature of the binomial function, it follows that we can determine $m_\lambda(1)$ from $m_\nu(1)$.

Now, consider a prime $p$. By logic similar to that used above, it follows
\[m_\nu(p)=m_\lambda(p)\cdot \binom{m_\lambda(1)+k-2}{k-1}.\]
Thus, we can extract $m_\lambda(p)$ from $m_\nu(p)$ for all prime $p$. 

We now proceed by induction. The base case is that we can retrieve $m_\lambda(1)$ from $m_\nu(1)$ and $m_\lambda(p)$ from $m_\nu(p)$ for all primes $p$, as demonstrated before. Now, fix a composite $y$, and assume that for all $x<y$, we can determine $m_\lambda(x)$ from $\nu$. Analogous to the argument made in Ballantine et al.~\cite{Bplus}, let $D_y$ be the collection of tuples $(d_1^{b_1},d_2^{b_2},\dots, d_t^{b_t})$ satisfying the following properties:
\begin{itemize}
\item $1<d_1<d_2<\dots<d_t<y$
\item $b_1,\dots, b_t>0$
\item $b_1+\dots+b_t\leq k$
\item $y=d_1^{b_1}d_2^{b_2}\dots d_t^{b_t}$
\end{itemize}
We obtain the following recursive relation:
\begin{align*}
m_\nu(y)&=m_\lambda(y)\cdot \binom{m_\lambda(1)+k-2}{k-1}\\
&+\sum_{(d_1^{b_1},\dots,d_t^{b_t})\in D_y} \binom{m_\lambda(1)+k-1-(b_1+b_2+\dots+b_t)}{m_\lambda(1)-1}\prod_{i=1}^t \binom{m_\lambda(d_i)+b_i-1}{b_i}.\\
\end{align*}
Hence, we can determine $m_\lambda(y)$ from $m_\nu(y)$. Thus, by the principle of strong induction, we can determine $m_\lambda(i)$ from $\nu$ for all positive integers $i$. Therefore, we can construct $\lambda$ from $\nu=\prh_k(\lambda)$, as desired.
\end{proof}

\begin{remark}\label{remark: prhalg}
If $m_\lambda(1)=0$, by Theorem~\ref{theorem: prh}, there does exist an algorithm as well. However, it would be of a different nature than the one above as $m_\lambda(1)>0$ is a necessary condition for it to work. 
\end{remark}

\section{Schur polynomials}\label{sec: prs}

\subsection{Main results}
A natural question that arises is why we choose to consider injectivity for $\mathcal{P}(n)$ rather than the general case of $\mathcal{P}$. To illustrate this necessity, we give counterexamples for which $\prs_{\lambda'}$ is not injective on $\mathcal{P}$.

\begin{theorem}\label{theorem: neccesityofPn}
Consider a partition $\lambda'$ whose Young diagram is a rectangle with at least $2$ rows. Then $\prs_{\lambda'}$ is not injective on $\mathcal{P}$. It follows that $\prs_{\lambda'}$ is not injective on $\mathcal{P}$ for infinitely many partitions $\lambda'$. 
\end{theorem}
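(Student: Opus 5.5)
The plan is to exploit the fact that a rectangular Schur polynomial collapses to a single monomial when the number of variables equals the number of rows. Write $\lambda' = (c^r)$ with $c \ge 1$ and $r \ge 2$. We evaluate $s_{\lambda'}$ on partitions of length exactly $r$. This is allowed, since the maximum column height of $\lambda'$ is $r$, so $s_{\lambda'}$ is defined on $r$ variables.

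First, I will compute $s_{(c^r)}(x_1,\dots,x_r)$ using the bialternant formula from Lemma~\ref{lem: jacobitrudi} with $m = r$. The numerator is $\det(x_j^{c+r-i})_{i,j=1}^r$. Factoring $x_j^c$ out of column $j$ turns it into $(x_1\cdots x_r)^c \det(x_j^{r-i})$, so
\[s_{(c^r)}(x_1,\dots,x_r) = (x_1x_2\cdots x_r)^c.\]
As a cross-check, this also follows combinatorially. An SSYT of shape $(c^r)$ with entries in $\{1,\dots,r\}$ has strictly increasing columns of length $r$, so every column must read $1,2,\dots,r$. Hence there is exactly one tableau, and its weight is $(x_1\cdots x_r)^c$. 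In particular $s_{\lambda'}(\lambda)$ is nonzero for every $\lambda$ of length $r$, and $\prs_{\lambda'}(\lambda)$ is the one-part partition $\bigl((\lambda_1\cdots\lambda_r)^c\bigr)$.

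Next, I will exhibit two distinct partitions of length $r$ with equal products of parts. Take $\lambda = (4,1^{r-1})$ and $\mu = (2,2,1^{r-2})$; the second is a valid partition of length $r$ because $r \ge 2$. Both have product $4$, so $\prs_{\lambda'}(\lambda) = (4^c) = \prs_{\lambda'}(\mu)$ while $\lambda \ne \mu$. Thus $\prs_{\lambda'}$ is not injective on $\mathcal{P}$. Since $c \ge 1$ and $r \ge 2$ may be chosen arbitrarily, this covers infinitely many $\lambda'$, which gives the second sentence of the theorem.

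I do not expect a serious obstacle. The one point to verify carefully is the monomial collapse, which the factoring argument or the one-tableau argument settles. It is also worth noting why the statement is about $\mathcal{P}$ rather than $\Pn$: the examples $\lambda$ and $\mu$ have sizes $r+3$ and $r+2$, so they do not lie in the same $\Pn$. This is exactly why the paper restricts injectivity questions to $\Pn$.
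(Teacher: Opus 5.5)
Your proposal is correct and matches the paper's proof: the paper also writes $\lambda'$ as a rectangle, uses the bialternant form of Lemma~\ref{lem: jacobitrudi} to collapse $s_{\lambda'}$ on exactly as many variables as rows to $(x_1\cdots x_r)^c$, and compares $(4,1,\dots,1)$ with $(2,2,1,\dots,1)$. Your one-tableau cross-check and your remark that the two examples have different sizes are good additions. One notational point: under the paper's exponential notation, $(4^c)$ reads as $c$ copies of $4$, so it is clearer to say that the common image is the one-part partition whose single part is the number $4^c$.
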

\begin{proof}
Let $\lambda'=(k^n)$ where $k,n$ are positive integers and $n\geq 2$. Using the Jacobi-Trudi identities, it follows:
\[s_{\lambda'}(x_1,\dots,x_n)=(x_1\cdots x_n)^k.\]
Hence, consider the partitions $\lambda=(4,1,\dots,1)$ and $\mu=(2,2,1,\dots,1)$. Thus, $s_{\lambda'}(\lambda)=4^k=2^k\cdot 2^k=s_{\lambda'}(\mu)$, so $\prs_{\lambda'}$ is not injective, as desired. 
\end{proof}

Note that $e_k=s_{(1^k)}$ represents a rectangular partition with length at least $2$ if $k\geq 2$. Therefore, if we apply Theorem~\ref{theorem: neccesityofPn} to the case of $\pre_k$, we can see that it is necessary to work over $\Pn$ rather than just the general case of $\mathcal{P}$.

Generalizing the idea from Theorem~\ref{theorem: counter}, we can show the necessity of working over partitions of length strictly greater than $\ell(\lambda')$:

\begin{lem}\label{lem: counter}
Consider a partition $\lambda'$ whose Young diagram is a rectangle with at least $3$ rows. Then $\prs_{\lambda'}$ is not injective on $\mathcal{P}_\ell(n)$. It follows that $\prs_{\lambda'}$ is noninjective for infinitely many partitions where $\ell\geq 3$ is the length of $\lambda'$.
\end{lem}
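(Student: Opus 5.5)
The statement is slightly imprecise, and I read it as follows. Let $\lambda'=(k^{\ell})$ be a rectangle with $\ell\ge 3$ rows. Then there are (infinitely many) $n$ for which $\prs_{\lambda'}$ is not injective on $\mathcal{P}_\ell(n)$, i.e.\ on partitions whose length equals the number of rows of $\lambda'$.

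The plan is to reduce everything to Lemma~\ref{lem: counter3}. The first step is the computation already used in the proof of Theorem~\ref{theorem: neccesityofPn}. Apply the Jacobi--Trudi bialternant formula of Lemma~\ref{lem: jacobitrudi} in exactly $\ell$ variables. Since every row of $\lambda'$ has length $k$, the numerator matrix $(x_j^{k+\ell-i})$ is the Vandermonde matrix with each column $j$ scaled by $x_j^k$. Hence
\[s_{(k^\ell)}(x_1,\dots,x_\ell)=(x_1\cdots x_\ell)^k.\]
Combinatorially, the only SSYT of shape $(k^\ell)$ with entries in $\{1,\dots,\ell\}$ is the one whose $i$-th row is constantly $i$. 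So for $\lambda\in\mathcal{P}_\ell(n)$, $\prs_{\lambda'}(\lambda)$ is the single part $(\lambda_1\cdots\lambda_\ell)^k$. It therefore suffices to find distinct $\lambda,\mu\in\mathcal{P}_\ell(n)$ with $\lambda_1\cdots\lambda_\ell=\mu_1\cdots\mu_\ell$, which is the same as $\pre_\ell(\lambda)=\pre_\ell(\mu)$.

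The second step imports the construction from Theorem~\ref{theorem: counter}. Lemma~\ref{lem: counter3} gives, for every $n'>18$, distinct $\alpha,\beta\in\mathcal{P}_3(n')$ with equal products. A concrete instance is the pair $(q,2q-2,3q+3)$ and $(q-1,2q+2,3q)$ for $n'=6q+1$ prime. Appending $\ell-3$ parts equal to $1$ to each gives distinct $\lambda,\mu\in\mathcal{P}_\ell(n'+\ell-3)$ with the same product, and therefore
\[\prs_{\lambda'}(\lambda)=\prs_{\lambda'}(\mu).\]
This already gives non-injectivity for every $n>\ell+15$. Since $k\ge1$ and $\ell\ge3$ range freely, the failure occurs for infinitely many shapes $\lambda'$, which is the second sentence of the statement.

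The main subtlety is where the hypothesis $\ell\ge3$ enters. With two rows one would need $\lambda_1\lambda_2=\mu_1\mu_2$ and $\lambda_1+\lambda_2=\mu_1+\mu_2$, which forces $\lambda=\mu$. So the three-part collisions of Lemma~\ref{lem: counter3} are exactly what is needed. I would also check that $\ell(\lambda)=\ell$ equals the number of rows, so that $s_{\lambda'}(\lambda)$ is nonzero and is the single monomial above. Beyond that the argument is a direct reduction, and I do not expect further obstacles.
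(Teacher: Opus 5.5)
Your proposal is correct and follows essentially the same route as the paper: you collapse $s_{(k^\ell)}$ in $\ell$ variables to the single monomial $(x_1\cdots x_\ell)^k$, then take the equal-product pairs of Lemma~\ref{lem: counter3} and pad each with $\ell-3$ ones. This padding step is exactly the proof of Theorem~\ref{theorem: counter}, which the paper cites directly, and it gives non-injectivity for every $n>\ell+15$. Your extra checks (that there is only one SSYT of shape $(k^\ell)$, and that with two rows equal sum and product would force $\lambda=\mu$) are correct but not needed for the paper's argument.
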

\begin{proof}
Let $\lambda'=(k^m)$ where $k,m$ are positive integers with $m\geq 3$. Recall $s_{\lambda'}(x_1,\dots,x_m)=(x_1\cdots x_m)^k$. By Theorem \ref{theorem: counter}, for $m\geq 3$ and $n>m+15$, there exists $\lambda,\mu\in\mathcal{P}(n)$ of length $m$ such that $\pre_m(\lambda)=\pre_m(\mu)$.

Thus $\lambda_1\lambda_2\cdots\lambda_m=\mu_1\mu_2\dots\mu_m$. Therefore, it follows:
\[s_{\lambda'}(\lambda)=(\lambda_1\cdots \lambda_m)^k=(\mu_1\cdots \mu_m)^k=s_{\lambda'}(\mu).\] 
Hence, $\prs_\lambda'$ is not injective, as desired. 
\end{proof}

We now focus on proving Theorem~\ref{theorem: prs_(2,1)}, using an argument similar to \cite{Li}. We first give the following lemma, analogous to Lemma 2.1 in \cite{Li}:
\begin{lem}\label{lem: prs21li}
For $\lambda,\mu\in\mathcal{P}(n)$, if $\prs_{(2,1)}(\lambda)=\prs_{(2,1)}(\mu)$ and $\lambda_1=\mu_1$, then $\lambda=\mu$. 
\end{lem}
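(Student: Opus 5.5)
We would prove this the way Lemma~\ref{lem: liextension} is proved: strong induction on the index, determining $\lambda_{k'}$ from $\lambda_1,\dots,\lambda_{k'-1}$ by looking at the largest part of $\nu=\prs_{(2,1)}(\lambda)$ not accounted for by the earlier parts. By the conventions in Section~\ref{subsec: notations}, we first note that $\ell(\lambda)=\ell(\mu)=:\ell$. Indeed, by Example~\ref{exam: prs21} the number of parts of $\prs_{(2,1)}(\lambda)$ is $\ell(\ell-1)+2\binom{\ell}{3}$, counting the coefficient $2$ with multiplicity. This quantity is strictly increasing in $\ell$, so equal images force equal lengths.

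Next we record the monomials of $s_{(2,1)}(x_1,\dots,x_\ell)$ from Example~\ref{exam: prs21}. They are $x_i^2x_j$ and $x_ix_j^2$ for $i<j$, and $x_ix_jx_k$ (twice) for $i<j<k$. Each monomial is a product of exactly three of the $x$'s, using at least two distinct indices.

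For the induction, assume $\lambda_i=\mu_i$ for all $1\le i\le k'-1$, where $2\le k'\le \ell$. The base case $k'=2$ is the hypothesis $\lambda_1=\mu_1$. Let $M_{k'}(\lambda)$ be the multiset of evaluated monomials, with multiplicity, whose indices all lie in $\{1,\dots,k'-1\}$. For $k'=2$ this multiset is empty, since every monomial uses two distinct indices. By the inductive hypothesis $M_{k'}(\lambda)=M_{k'}(\mu)$. Hence
\[\prs_{(2,1)}(\lambda)\setminus M_{k'}(\lambda)=\prs_{(2,1)}(\mu)\setminus M_{k'}(\mu),\]
and in particular these two multisets have the same maximum.

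The key step is to show that the maximum of $\prs_{(2,1)}(\lambda)\setminus M_{k'}(\lambda)$ is $\lambda_1^2\lambda_{k'}$. This value does occur in the set: it comes from the monomial $x_1^2x_{k'}$, which is present because $1<k'$. Every other remaining monomial involves some index $j\ge k'$. Such a monomial is a product of three factors $\lambda_a\lambda_b\lambda_j$, where possibly $a=b$ or $b=j$. Each factor is at most $\lambda_1$ and the factor $\lambda_j$ is at most $\lambda_{k'}$, so the monomial is at most $\lambda_1^2\lambda_{k'}$. For instance, $\lambda_i\lambda_j^2\le \lambda_1\lambda_{k'}^2\le\lambda_1^2\lambda_{k'}$.

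Applying the same argument to $\mu$ gives $\lambda_1^2\lambda_{k'}=\mu_1^2\mu_{k'}=\lambda_1^2\mu_{k'}$, so $\lambda_{k'}=\mu_{k'}$. Strong induction up to $k'=\ell$ then gives $\lambda=\mu$.

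The main obstacle is making sure the bound on the remaining monomials holds in every monomial type, including the squared term $x_i x_j^2$ where the small index appears twice. One also has to check that the multiset subtraction is legitimate given the coefficient $2$ on the terms $x_ix_jx_k$. This is handled by defining $M_{k'}$ with multiplicities, so that the two copies of each such term are either both removed or both kept.
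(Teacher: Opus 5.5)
Your proof is correct and follows essentially the same route as the paper. It first shows equal lengths from the strictly increasing count $\ell(\ell-1)+2\binom{\ell}{3}$, then inducts by removing the monomials in the first $k'-1$ variables and comparing the maximum $\lambda_1^2\lambda_{k'}$ of what remains; the differences are cosmetic, since you fold the $k'=2$ case into the general step via the empty multiset and you actually verify the bound on the remaining monomials, which the paper only asserts.
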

\begin{proof}
Recall from Example~\ref{exam: prs21} that
\[\ell(\prs_{(2,1)}(\lambda))=2\cdot\binom{\ell(\lambda)}{2}+2\cdot \binom{\ell(\lambda)}{3}=2\cdot\binom{\ell(\mu)}{2}+2\cdot \binom{\ell(\mu)}{3}=\ell(\prs_{(2,1)}(\mu)).\]
Thus, due to the strictly increasing nature of the binomial function, it follows that $\ell(\lambda)=\ell(\mu)$.

Using an inductive approach, it suffices to show if $\lambda_i=\mu_i$ for $1\leq i\leq k-1$, then $\lambda_{k}=\mu_{k}$ for $k\geq 2$. 

Recall, say by the Jacobi-Trudi identities, the Schur polynomial of $(2,1)$ over $\ell$ variables can be rewritten as the following:
\[s_{(2,1)}(x_1,x_2,\dots,x_\ell)=\sum_{1\leq i< j\leq \ell} (x_i^2x_j+x_ix_j^2)+2\sum_{1\leq i<j<k\leq \ell}{x_ix_jx_k}.\]
By the inductive hypothesis, the summands of $s_{(2,1)}(\lambda_1,\lambda_2,\dots,\lambda_{k-1})$ and $s_{(2,1)}(\mu_1,\mu_2,\dots,\mu_{k-1})$ are the same. 

Thus, consider the summands of the polynomial
\[s_{(2,1)}(x_1,x_2,\dots,x_\ell)-s_{(2,1)}(x_1,x_2,\dots,x_{k-1})\]
when represented as a partition for $k\geq 3$. When evaluated at $\lambda$ and $\mu$, we can see that maximal elements would be $\lambda_1^2\lambda_{k}$ and $\mu_1^2\mu_{k}$. Thus, as $\prs_{(2,1)}(\lambda)=\prs_{(2,1)}(\mu)$ and by the inductive hypothesis, this means $\lambda_{k}=\mu_{k}$ if $k\geq 3$, as desired.

Thus, it suffices to show $\lambda_1=\mu_1$ implies $\lambda_2=\mu_2$. Note that maximum elements of $\prs_{(2,1)}(\lambda)$ and $\prs_{(2,1)}(\mu)$ are $\lambda_1^2\lambda_2$ and $\mu_1^2\mu_2$, respectively. Thus, as these two are equal, it follows $\lambda_2=\mu_2$, as desired.
\end{proof}

To finish the proof, recall the following fact \cite{Li}:
\[\lambda_1=\lim_{p\to\infty} \left(\sum_{i=1}^\ell \lambda_i^p\right)^{\frac1p}.\]
We claim that $\sum_{i=1}^\ell \lambda_i^{3^a}=\sum_{i=1}^\ell \mu_i^{3^a}$, which would, by the above equality, imply $\lambda_1=\mu_1$. We do so through induction on $a$. Our base case is $a=0$, where we have $\sum_{i=1}^\ell \lambda_i=\sum_{i=1}^\ell \mu_i=n$, which is true. Now, for the inductive step, assume $\sum_{i=1}^\ell \lambda_i^{3^k}=\sum_{i=1}^\ell \mu_i^{3^k}$. Therefore, by the formulation of $s_{(2,1)}$, it follows:
\begin{align*}
\sum_{1\leq i\leq \ell}\lambda_i^{3^{k+1}}&=\left(\sum_{1\leq i\leq \ell}\lambda_i^{3^{k}}\right)^3-3\sum_{1\leq i\neq j\leq \ell} (\lambda_i^2\lambda_j)^{3^{k}}-6\sum_{1\leq i<j<k\leq \ell}(\lambda_i\lambda_j\lambda_k)^{3^k}\\
&=\left(\sum_{1\leq i\leq \ell}\mu_i^{3^{k}}\right)^3-3\sum_{1\leq i\neq j\leq \ell} (\mu_i^2\mu_j)^{3^{k}}-6\sum_{1\leq i<j<k\leq \ell}(\mu_i\mu_j\mu_k)^{3^k}=\sum_{1\leq i\leq \ell}\mu_i^{3^{k+1}}.\\
\end{align*}
Therefore, by the principle of induction, we conclude that $\lambda_1=\mu_1$, so $\prs_{(2,1)}$ is injective on $\Pn$.

\prsTheorem*
\begin{remark}\label{remark: s21good}
The reason this argument works so well is because of how $s_{(2,1)}$ fundamentally acts. Particularly, $s_{(2,1)}$ can be written solely in terms of power sums that are powers of $3$:
\[s_{(2,1)}=\frac13(p_1^3-p_3).\]
Thus, this inductive-type method works perfectly, similar to the one used in \cite{Li}. 

However, the method does not readily extend to general $\lambda'$. 
\end{remark}

We now provide a lemma similar in nature to Theorem~\ref{theorem: length}.
\begin{lem}\label{lem: prslength}
For any partition $\lambda$ and integer $n\geq \lambda_1$, we have $(x_1x_2\dots x_m)^n\cdot s_\lambda(x_1^{-1},\dots,x_m^{-1})=s_{f_n(\lambda)}(x_1,\dots,x_m)$ where $f_n(\lambda_i)=n-\lambda_{m+1-i}$.
\end{lem}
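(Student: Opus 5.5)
The natural tool is the bialternant (Weyl character) formula from Lemma~\ref{lem: jacobitrudi}, applied over $m$ variables. Throughout we pad $\lambda$ with zeros so that $\lambda=(\lambda_1,\dots,\lambda_m)$; we may assume $\ell(\lambda)\le m$, since otherwise $s_\lambda$ vanishes on $m$ variables. We then interpret $f_n(\lambda)$ as the sequence whose $i$-th entry is $n-\lambda_{m+1-i}$. Because $n\ge\lambda_1$ and $\lambda$ is nonincreasing, this sequence is nonincreasing with nonnegative entries, so it is a genuine partition (the complement of $\lambda$ inside the $m\times n$ rectangle).

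The plan is to compute $s_\lambda(x_1^{-1},\dots,x_m^{-1})$ as a ratio $\det(x_j^{-(\lambda_i+m-i)})/\det(x_j^{-(m-i)})$. We multiply every column $j$ of the numerator matrix by $x_j^{n+m-1}$ and every column of the denominator by $x_j^{m-1}$. This multiplies the numerator determinant by $(x_1\cdots x_m)^{n+m-1}$ and the denominator by $(x_1\cdots x_m)^{m-1}$, so the ratio picks up exactly the factor $(x_1\cdots x_m)^{n}$. The new exponents are $n+m-1-\lambda_i-m+i=n-\lambda_i+i-1$ in the numerator and $i-1$ in the denominator.

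The next step is to reverse the row order via $i\mapsto m+1-i$. In the numerator the exponent in row $i$ becomes $n-\lambda_{m+1-i}+m-i=f_n(\lambda)_i+m-i$, and in the denominator it becomes $m-i$. Reversing $m$ rows multiplies each determinant by the same sign $(-1)^{\binom{m}{2}}$, so these signs cancel in the ratio. We are left with $\det(x_j^{f_n(\lambda)_i+m-i})/\det(x_j^{m-i})=s_{f_n(\lambda)}(x_1,\dots,x_m)$, again by Lemma~\ref{lem: jacobitrudi}. Multiplying through by $(x_1\cdots x_m)^n$ gives the claimed identity.

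The only delicate point is bookkeeping rather than anything substantive. We need to apply the bialternant formula with inverse variables, which is legitimate as an identity of rational functions, or alternatively after clearing denominators. We also need to track the padded zeros correctly, since they produce parts equal to $n$ in $f_n(\lambda)$, and to make sure the column scaling is uniform across rows. This is the step I would check most carefully. A sanity check is $\lambda=(1^k)$, $n=1$: the identity then gives $e_{m-k}=e_m\cdot e_k(x^{-1})$, which is precisely the reciprocal trick used in Theorem~\ref{theorem: length}.
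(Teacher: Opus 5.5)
Your proposal is correct and is essentially the paper's proof: invert the variables in the bialternant formula, scale the columns of the numerator and denominator by $x_j^{n+m-1}$ and $x_j^{m-1}$, then reverse the rows so that the two factors of $(-1)^{\binom{m}{2}}$ cancel. One small correction: $\ell(\lambda)\le m$ is not a without-loss-of-generality reduction but a needed (implicit) hypothesis. For $\ell(\lambda)>m$ the left side vanishes, while $s_{f_n(\lambda)}(x_1,\dots,x_m)\neq 0$, so the identity fails in that case.
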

\begin{proof}
We can use the Jacobi-Trudi identities: recall $s_\lambda=\frac{\det(x_j^{\lambda_i+m-i})_{i,j=1}^m}{\det(x_j^{m-i})_{i,j=1}^m}$. We look at the numerator and denominator of this expression separately. Note, in the denominator, if we were to replace $x_i$ with its reciprocal, it would become $\det(x_j^{i-m})_{i,j=1}^m$. Thus, it follows:
\[(x_1\dots x_m)^{m-1}\cdot \det(x_j^{i-m})_{i,j=1}^m=\det(x_j^{i-1})_{i,j=1}^m=(-1)^{\binom{m}{2}}\cdot \det(x_j^{m-i})_{i,j=1}^m.\]
Likewise, if we replaced $x_i$ with its reciprocal in the numerator and multiplied it by $(x_1\dots x_m)^{n+m-1}$, we would get the following equality:
\begin{align*}
(x_1\dots x_m)^{n+m-1}\det(x_j^{-\lambda_i-m+i})_{i,j=1}^m&=\det(x_j^{n-\lambda_i+i-1})_{i,j=1}^m\\
&=\det(x_j^{f_n(\lambda_{m+1-i})+i-1})_{i,j=1}^m\\
&=(-1)^{\binom{m}{2}}\det(x_j^{f_n(\lambda_i)+m-i})_{i,j=1}^m.
\end{align*}

Therefore, it follows:
\begin{align*}
(x_1\dots x_m)^n\cdot s_\lambda(x_1^{-1},\dots,x_m^{-1})&=\frac{(x_1\dots x_m)^{n+m-1}\det(x_j^{-\lambda_i-m+i})_{i,j=1}^m}{(x_1\dots x_m)^{m-1}\det(x_j^{-m+i})_{i,j=1}^m}\\
&=\frac{(-1)^{\binom{m}{2}}\det(x_j^{f_n(\lambda_i)+m-i})_{i,j=1}^m}{(-1)^{\binom{m}{2}}\det(x_j^{m-i})_{i,j=1}^m}\\
&=s_{f_n(\lambda)}(x_1,\dots,x_m),
\end{align*}
as desired.
\end{proof}

Note, by the symmetry of $s_{\lambda'}$, if $\prs_{\lambda'}(\lambda)=\prs_{\lambda'}(\mu)$, then $\prod \lambda_i=\prod\mu_i$. Therefore, it follows that $s_{f_n(\lambda')}(\lambda)=s_{f_n(\lambda')}(\mu)$, potentially allowing us to reduce certain partitions to easier to access partitions. Hence, we can make the following statement:
\begin{cor}
If $\prs_{(m,m-1,\dots,1)}(\lambda)=\prs_{(m,m-1,\dots,1)}(\mu)$ where $\lambda,\mu$ are each of length $m$, then $\prs_{(m-1,\dots,1)}(\lambda)=\prs_{(m-1,\dots,1)}(\mu)$. 
\end{cor}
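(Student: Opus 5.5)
Take $n=m$ in Lemma~\ref{lem: prslength}. For the staircase $\delta_m=(m,m-1,\dots,1)$ we have $\lambda'_1=m\ge$ every part, and working over $m$ variables we regard $\delta_m$ as $(m,m-1,\dots,1)$ with $\lambda'_{m+1-i}=i$. Then $f_m(\delta_m)_i=m-\lambda'_{m+1-i}=m-i$, so $f_m(\delta_m)=(m-1,m-2,\dots,1,0)=\delta_{m-1}$. The lemma therefore gives the polynomial identity
\[
(x_1\cdots x_m)^m\, s_{\delta_m}(x_1^{-1},\dots,x_m^{-1}) = s_{\delta_{m-1}}(x_1,\dots,x_m).
\]

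Next I will turn this identity into a statement about multisets of monomials, not just sums. I write $s_{\delta_m}=\sum_T x^T$ over SSYT with entries in $\{1,\dots,m\}$, so the multiset of monomials is $\{x^T\}$. The left side is then the multiset $\{(x_1\cdots x_m)^m/x^T\}$, and each such term is a genuine monomial because every exponent in $x^T$ is at most $m$. Each entry value appears at most once per column, and $\delta_m$ has $m$ columns. The identity says these two multisets of monomials coincide as polynomials with nonnegative integer coefficients. This means that for every exponent vector, the number of tableaux $T$ with $(m,\dots,m)-\mathrm{type}(T)=\beta$ equals the number of SSYT of shape $\delta_{m-1}$ with type $\beta$.

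Now I evaluate at $x=\lambda$, a partition of length $m$. The parts of $\prs_{\delta_{m-1}}(\lambda)$ are obtained from the parts of $\prs_{\delta_m}(\lambda)$ by the map $v\mapsto P_\lambda^m/v$, where $P_\lambda=\lambda_1\cdots\lambda_m$. Suppose $\prs_{\delta_m}(\lambda)=\prs_{\delta_m}(\mu)$. The product of all parts equals $P^{c}$ for some constant $c>0$ depending only on the shape and $m$, because by symmetry each variable has the same total exponent across all terms. Hence $P_\lambda^c=P_\mu^c$, and so $P_\lambda=P_\mu$ since both are positive. The same transformation $v\mapsto P^m/v$ is then applied to equal multisets, so it produces equal multisets, namely $\prs_{\delta_{m-1}}(\lambda)=\prs_{\delta_{m-1}}(\mu)$.

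The main obstacle is the bookkeeping in Lemma~\ref{lem: prslength}. I need to confirm that the trailing zero part and the indexing $f_n(\lambda_i)=n-\lambda_{m+1-i}$ really produce $\delta_{m-1}$. I also need to pass carefully from equality of polynomials to equality of multisets of evaluated summands, which requires that coefficients, and hence multiplicities, match. Nonnegativity of both expansions settles the second point.
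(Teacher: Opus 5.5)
Your proposal is correct and follows the same route as the paper. You apply Lemma~\ref{lem: prslength} with $n=m$ to get $f_m(\delta_m)=\delta_{m-1}$, use symmetry of the Schur polynomial to force $\prod\lambda_i=\prod\mu_i$, and then transport the multiset of summands via $v\mapsto P^m/v$. Your extra step, passing from the polynomial identity to an identity of multisets of monomials (the coefficients are tableau counts, so nothing cancels), makes explicit what the paper's terse claim ``$s_{f_n(\lambda')}(\lambda)=s_{f_n(\lambda')}(\mu)$'' leaves implicit.
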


So far, we have been thinking about Schur polynomials from the perspective of the Jacobi-Trudi identities. However, it is helpful to think about it visually, from the perspective of their semi-standard Young tableaux. To demonstrate this, we make the following theorem, a generalization of Lemma~\ref{lem: prs21li}:

\begin{theorem}\label{theorem: liextensionprs}
For $\lambda,\mu\in\mathcal{P}(n)$ of the same length, if $\prs_{\lambda'}(\lambda)=\prs_{\lambda'}(\mu)$ and $\lambda_i=\mu_i$ for all $1\leq i\leq k-1$, then $\lambda=\mu$, where $k$ is the length of $\lambda'$.
\end{theorem}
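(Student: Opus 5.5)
Throughout, $k=\ell(\lambda')$, $\ell=\ell(\lambda)=\ell(\mu)$, and we may assume $\ell\ge k$ (otherwise $s_{\lambda'}$ vanishes). I will follow the template of Lemma~\ref{lem: liextension} and Lemma~\ref{lem: prs21li}: strong induction on the index $j\ge k$, showing that $\lambda_i=\mu_i$ for all $i\le j-1$ forces $\lambda_j=\mu_j$.

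Fix $j\ge k$ and assume $\lambda_i=\mu_i$ for $i\le j-1$. Split the SSYT sum into tableaux with entries in $\{1,\dots,j-1\}$ and those using some entry $\ge j$. That is, consider the multiset $M_j(\lambda)$ of summands of $s_{\lambda'}(\lambda_1,\dots,\lambda_{j-1})$, counted with multiplicity (the Kostka numbers). It is determined by $\lambda_1,\dots,\lambda_{j-1}$, so $M_j(\lambda)=M_j(\mu)$. Hence
\[\prs_{\lambda'}(\lambda)\setminus M_j(\lambda)=\prs_{\lambda'}(\mu)\setminus M_j(\mu),\]
and in particular the maxima of these two multisets agree. (When $j-1<k$, that is $j=k$, $M_k$ is empty, since $s_{\lambda'}$ in fewer than $k$ variables is $0$.)

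The key step is to identify the maximum of the remaining multiset. Let $T_0$ be the tableau that fills row $i$ with entry $i$ for $i<k$, and fills the last row (length $\lambda'_k$) with $1,\dots,k-1$ in its first $\lambda'_k-1$ boxes... but that fails the column condition. The correct candidate is the tableau $T_j$ equal to the superstandard filling (row $i$ all $i$) except that the last box of the bottom row $k$, which is a corner, is replaced by $j$. This is a valid SSYT: its row is weakly increasing since $j\ge k$, and its column is strict since $j$ sits at the bottom. Its weight is
\[w_j=\lambda_1^{\lambda'_1}\cdots\lambda_{k-1}^{\lambda'_{k-1}}\lambda_k^{\lambda'_k-1}\lambda_j .\]
For $j=k$ this is the superstandard tableau, whose weight is clearly the global maximum by dominance: its content is the highest in dominance order, and $\lambda$ is nonincreasing. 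For $j>k$ I must show that every SSYT $T$ using some entry $\ge j$ has $x^T\le w_j$ at $\lambda$. First, the column-strictness bound gives that the entry in row $r$ is at least $r$, so each box in row $r$ contributes a factor at most $\lambda_r$. Second, replacing the largest entry $e\ge j$ of $T$ gives a factor $\lambda_e\le\lambda_j$. Then a box-by-box comparison works: there is an injection from the boxes of $T$ to the boxes of $T_j$, sending the box holding the entry $\ge j$ to the box holding $j$ and the others to boxes whose row index is at most their entry. I expect this injection, handling a box with large entry sitting in an upper row, to be the main obstacle. I would first try the cleaner route: the content $\alpha$ of $T$, sorted, is dominated by the content of $T_j$ restricted appropriately, and $\lambda$ is decreasing, so a majorization (Karamata-type) argument on $\log\lambda$ applies.

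Given the maximum, $w_j(\lambda)=w_j(\mu)$. The prefactor involves only $\lambda_1,\dots,\lambda_k$. For $j>k$ these are already known to agree, so dividing gives $\lambda_j=\mu_j$. For $j=k$ the prefactor has $\lambda_1,\dots,\lambda_{k-1}$ equal and $\lambda_k$ raised to the power $\lambda'_k$, so we get $\lambda_k^{\lambda'_k}=\mu_k^{\lambda'_k}$, hence $\lambda_k=\mu_k$. Induction up to $j=\ell$ then gives $\lambda=\mu$.
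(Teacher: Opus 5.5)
Your proposal is correct and follows the paper's proof essentially step for step. It uses the same induction on the index $j\ge k$, the same removal of the tableaux whose entries all lie below $j$, and the same extremal tableau (the superstandard filling with the last box of row $k$ replaced by $j$), whose weight forces $\lambda_j=\mu_j$. The maximality step you flag as the main obstacle, which the paper simply asserts without justification, already follows from your own two bounds: if an entry $e\ge j$ of $T$ sits in row $r\le k$, then $x^T\le \lambda_j\prod_s\lambda_s^{\lambda'_s}/\lambda_r\le \lambda_j\prod_s\lambda_s^{\lambda'_s}/\lambda_k$, which is exactly the weight of $T_j$, so no majorization argument is needed.
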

\begin{proof}
Consider any integer $k'\geq k$, which we let be our inductive index. Using an inductive approach, it suffices to show if $\lambda_i=\mu_i$ for $1\leq i\leq k'-1$, then $\lambda_{k'}=\mu_{k'}$. 

First, note that a maximum element of $\prs_{\lambda'}({\lambda})$ would be generated by the SSYT of $\lambda'$ such that the first row consists of only $1$'s, the second row consists of only $2$'s, and so on with the $k$th row consisting of $\lambda'_k$ $k$'s. Thus, a maximum element of $\prs_{\lambda'}{\lambda}=\lambda_1^{\lambda'_1}\lambda_2^{\lambda'_2}\cdots\lambda_{k-1}^{\lambda'_{k-1}}\lambda_k^{\lambda'_k}$. Likewise, a maximum element of 
$\prs_{\lambda'}{\mu}=\mu_1^{\lambda'_1}\mu_2^{\lambda'_2}\cdots\mu_{k-1}^{\lambda'_{k-1}}\mu_k^{\lambda'_k}$. Thus, as both of these expressions are equal, and by the hypothesis that for all $1\leq i\leq k-1$, we have $\lambda_i=\mu_i$, we can determine that $\lambda_k=\mu_k$.

Now, by the inductive hypothesis, it follows that the summands of $s_{\lambda'}(\lambda_1,\lambda_2,\dots,\lambda_{k'-1},0)$ and $s_{\lambda'}(\mu_1,\mu_2,\dots,\mu_{k'-1},0)$ are the same. 
Thus, consider the summands of the polynomial
\[s_{\lambda'}(x_1,x_2,\dots,x_\ell)-s_{\lambda'}(x_1,x_2,\dots,x_{k'-1},0)\]
when represented as a partition. Consider the SSYT of $\lambda'$ such that the first row consists of only $1$'s, the second row consists of only $2$'s, and so on, except with the $k$th row consisting of $\lambda'_k-1$ $k$'s and then one $k'$. It follows that this particular SSYT represents a maximum element of $s_{\lambda'}(x_1,x_2,\dots,x_\ell)-s_{\lambda'}(x_1,x_2,\dots,x_{k'-1},0)$. In particular, when evaluated at $\lambda$ and $\mu$, these maximum elements would be equal:
\[\lambda_1^{\lambda'_1}\lambda_2^{\lambda'_2}\cdots\lambda_{k-1}^{\lambda'_{k-1}}\lambda_k^{\lambda'_k-1}\lambda_{k'}=\mu_1^{\lambda'_1}\mu_2^{\lambda'_2}\cdots\mu_{k-1}^{\lambda'_{k-1}}\mu_k^{\lambda'_k-1}\mu_{k'}.\]
Thus, by the inductive hypothesis, it follows that $\lambda_k^{\lambda'_k-1}\lambda_{k'}=\mu_k^{\lambda'_k-1}\mu_{k'}$. As $\lambda_k=\mu_k$, it then follows that $\lambda_{k'}=\mu_{k'}$, as desired.
\end{proof}

As shown by the above proof, the geometry of a standard partition can be quite restrictive, particularly by the second condition in the definition of a SSYT. Therefore, we are motivated to look beyond standard partitions into skew shapes.

\subsection{Skew shapes}

\skewTheorem*
\begin{proof}
Suppose $\lambda'/\mu'$ is of size $m$. Thus, over $k\geq 1$ variables, it follows:
\[s_{\lambda'/\mu'}(x_1,x_2,\dots,x_k)=(x_1+x_2+\dots+x_k)^m.\]
Due to the similarity in structure this has with the complete homogeneous polynomials, we can readily apply the same argument used in Theorem~\ref{theorem: prh}, allowing us to prove injectivity of $\prs_{\lambda'/\mu'}$. 
\end{proof}
\begin{cor}\label{cor: skewcor}
The function $\prs_{(m,m-1,\dots,1)/(m-1,m-2,\dots,1)}$ is injective on $\Pn$.
\end{cor}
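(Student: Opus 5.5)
The plan is to deduce the corollary directly from Theorem~\ref{theorem: skewTheorem}: we only need to check that the skew shape $\lambda'/\mu'$ with $\lambda'=(m,m-1,\dots,1)$ and $\mu'=(m-1,m-2,\dots,1)$ has at most one square in each row and at most one square in each column.

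First, compute the skew diagram. Row $i$ of $\lambda'$ has $m+1-i$ boxes for $1\le i\le m$. Row $i$ of $\mu'$ has $m-i$ boxes for $1\le i\le m-1$, and $\mu'_m=0$. So row $i$ of the skew diagram consists of exactly one box, in column $m+1-i$. This includes the case $i=m$, where the single box sits in column $1$.

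Next, check the columns. As $i$ runs from $1$ to $m$, the column index $m+1-i$ takes each value in $\{1,\dots,m\}$ exactly once. Hence each column contains exactly one square, and the skew shape is a disjoint antidiagonal of $m$ single boxes.

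Finally, Theorem~\ref{theorem: skewTheorem} applies and gives injectivity of $\prs_{(m,m-1,\dots,1)/(m-1,\dots,1)}$ on $\Pn$. As a sanity check, the tableaux condition imposes no constraints between these boxes, since no two share a row or column. So $s_{\lambda'/\mu'}=(x_1+\cdots+x_\ell)^m=h_1^m$, consistent with the proof of that theorem. There is no real obstacle; the only care needed is the edge case of the last row, where $\mu'$ has length $m-1$ while $\lambda'$ has length $m$.
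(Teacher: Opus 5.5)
Your proposal is correct and follows the paper's own route. The paper simply invokes Theorem~\ref{theorem: skewTheorem}, noting that $(m,m-1,\dots,1)/(m-1,\dots,1)$ is $m$ isolated antidiagonal boxes, one per row and column, so that $s_{\lambda'/\mu'}=(x_1+\cdots+x_\ell)^m$; your row-by-row check, including the last row where $\mu'_m=0$, spells out the verification the paper leaves implicit.
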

Therefore, it seems that the two definitions of $s_\lambda$, through the Jacobi-Trudi identities and the semi-standard Young tableaux formulation, seem to both be very useful in the development of $\prs_{\lambda'}$.

\subsection{Applications}\label{subsec:app}
Given symmetric functions $f,g$ where $g = \sum_a x^{a^i}$ where each $a^i$ is a degree vector and repeated monomials appear multiple times, then the plethysm $f[g]$ of symmetric functions is defined to be $f(x^{a_1}, x^{a_2}, ...)$. Li's injectivity result for $\pre_2$~\cite{Li} shows that the family of plethysms $\{e_r[e_2](\lambda)\}_{r\ge 1}$, which determines the multiset $\pre_2(\lambda)$, also in fact determines $\lambda$ and hence $\{e_r(\lambda)\}_{r \ge 1}$ as observed in \cite{Bplus}.  However, due to the failure of injectivity for $\pre_k$ for $k \ge 3$, the family of plethysms $\{e_r[e_k]\}$ evaluated at the parts of a partition $\lambda$ does not necessarily determine the entire family $\{e_r\}$ evaluated at $\lambda$. 

Furthermore, based on the result that $\prs_{(2,1)}$ is injective on $\mathcal{P}(n)$, as shown in Theorem~\ref{theorem: prs_(2,1)}, it follows that the family of plethysms $\{s_\nu[s_{(2,1)}]\}_{\nu\in\mathcal{P}}$ evaluated at $\lambda$ a partition of $n$ determines $\lambda$ and hence the original Schur polynomials evaluated at $\lambda$.

The problem of checking whether $\prs_\mu(\lambda)$ is injective on the set $\mathcal{P}(n)$ is equivalent to building a diagonal matrix $M_\lambda = \mathrm{diag}(\lambda_1,\dots,\lambda_\ell)$, and asking whether $M_\lambda$ can be retrieved (up to ordering of eigenvalues) from the spectrum, regarded as a multiset, of $\mathbb{S}_\mu(M_\lambda)$, where $\mathbb{S}_\mu(-)$ is the Schur functor, with the additional constraint that the diagonal entries of $M_\lambda$ are positive integers with sum $n$.

\section{Open Questions}\label{sec: openquestions}

Even though the $\pre_k$ injectivity conjecture of \cite{BBM} was shown by several authors to be false, a variant of it remains open.
\mainQuest*

Our work and \cite{Ha+} lead us to the following refinement.
\begin{conj}\label{conj: new}
For each integer $k\geq 3$, there exists some constant $M_k$ such that $\pre_k$ fails to be injective on $\mathcal{P}_{>k}(n)$ for all $n\geq M_k$.
\end{conj}

The result of \cite{Garg}, proven independently by us, that $\pre_2(n) = 1$ if and only if $n \in \{1,2,4\}$ motivates the following question.
\begin{quest}\label{quest: deNew}

For what positive integers $a$ do there exist $n$ and $k$ such that $\pre_k(n)=a$?

\end{quest}

Another fruitful direction concerns $\prs_{\lambda'}$. We refine Question 5.3 of \cite{Ha+} as follows. 
\begin{conj}\label{conj: prsNew}
For every two-row partition $\lambda'$, the function $\prs_{\lambda'}$ is injective on $\mathcal{P}(n)$ for every positive integer $n$ on which $\prs_{\lambda'}$ is defined.
\end{conj}

The case $\lambda' = (1,1)$ was shown in \cite{Li}, but the general proof will probably not use exactly the same techniques. 
However, proving this conjecture would have a nice plethysm implication analogous to that of $\prs_{(2,1)}$ in Section \ref{subsec:app}, except with a two-row partition instead of $(2,1)$. It would also be interesting to consider the case of $\lambda'$ a staircase or hook partition. 

For reference, the code for this project can be found in the GitHub repository linked \href{https://github.com/katherineatung/primes-2026}{here}.

\section{Acknowledgments}\label{sec: acknowledgments}
We thank the MIT PRIMES program, during which this research was conducted. K.T. thanks the Max Planck Institute for Mathematics in the Sciences for their hospitality and access to the compute server. R.T. thanks his family, friends, and teachers for their support. 
\printbibliography
\end{document}